\documentclass[11pt,reqno]{amsart}
\usepackage{setspace}

\usepackage{color}
\usepackage[pagebackref]{hyperref}
\usepackage{enumitem}
\usepackage{mathtools}
\usepackage{amsfonts}
\usepackage{amsmath}
\usepackage{amsthm}
\usepackage[dvipsnames]{xcolor}
\usepackage{graphicx}
\input{xy}
\xyoption{all}
\usepackage{mathtools}
\usepackage{tikz}
\usepackage{tikz-cd}
\usetikzlibrary{%
  matrix,%
  calc,%
  arrows%
}
\textwidth=16.100cm \textheight=20.200cm \topmargin=0.00cm
\oddsidemargin=0.00cm \evensidemargin=0.00cm \headheight=14.4pt
\headsep=1cm 
\hyphenation{semi-stable} 

\emergencystretch=10pt

\theoremstyle{plain}
\newtheorem{defi}{Definition}[section]
\newtheorem{thm}[defi]{ Theorem}

\newtheorem{prop}[defi]{Proposition}

\newtheorem{cor}[defi]{Corollary}
\newtheorem{lemma}[defi]{Lemma}
\newtheorem{nota}[defi]{Notations}
\newtheorem{rmk}[defi]{Remark}
\newtheorem{dis}[defi]{Discussion}
\newtheorem{ques}[defi]{Question}

\newcommand{\fm}{\mathfrak{m}}

\newcommand{\fp}{\mathfrak{p}}

\newcommand{\ra}{\rightarrow}

\DeclareMathOperator{\Hom}{Hom} \DeclareMathOperator{\Spec}{Spec}\DeclareMathOperator{\Image}{Im} \DeclareMathOperator{\coker}{Coker}
\DeclareMathOperator{\Ker}{Ker} \DeclareMathOperator{\Ann}{Ann} \DeclareMathOperator{\Supp}{Supp}\DeclareMathOperator{\Sym}{Sym} \DeclareMathOperator{\Ass}{Ass}\DeclareMathOperator{\Proj}{Proj}   
\DeclareMathOperator{\Ext}{Ext} \DeclareMathOperator{\Dim}{dim} 
    
\DeclareMathOperator{\grade}{grade}    \DeclareMathOperator{\G}{\boldsymbol{\gamma}}

\DeclareMathOperator{\Var}{V} 

\def\tt{{\bf t}}

\begin{document}
\title{A genus explanation of Buchsbaum-Rim multiplicity}
\author[ Vinicius Bou\c{c}a]{V. Bou\c{c}a}
\address{Vinicius Bou\c{c}a, Centro Federal de Educa\c{c}\~{a}o e  Tecnolgica Celso Suckow da Fonseca (CEFET-RJ), Av. Gov. Roberto Silveira, 1900, 28635-080, Nova Friburgo, Rio de Janeiro, Brazil, e-mail: vinicius.costa@cefet-rj.br }
\author[Thiago Fiel]{T.Fiel}
\address{Thiago Fiel da Costa Cabral, Departamento de Matem\'atica, Universidade Federal da Para\'iba, 58051-900 Jo\~ao Pessoa, PB, Brazil, email:	thiago.fiel@academico.ufpb.br,  thiagofieldacostacabral@gmail.com}
\author[S. Hamid Hassanzadeh]{S.H.Hassanzadeh}
\address{Seyed Hamid Hassanzadeh, Centro de Tecnologia - Bloco C, Sala ABC\\
 Cidade Universit\'{a}ria da Universidade Federal do Rio de Janeiro\\
21941-909  Rio de Janeiro, RJ, Brazil\\
  e-mail: hamid@im.ufrj.br }
\author[Jose Naeliton]{J.Naeliton}
\address{Jose Naeliton, Departamento de Matem\'atica, Universidade Federal da Para\'iba, 58051-900 Jo\~ao Pessoa, PB, Brazil, email:}
\maketitle
\footnotetext{Mathematics Subject Classification 2020 (MSC2020). Primary 13H15, 14C17, Secondary 32S15.}
\footnotetext{The second author was supported by PhD scholarship from CAPES-Brasil.}
\begin{abstract}
    We study the Buchsbaum-Rim multiplicity, ab inicio,  through a Koszul-\v{C}ech spectral sequence.   We show that  the  Buchsbaum--Rim multiplicity is the arithmetic genus (Euler characteristic) of Koszul homology sheaves on a projective space over the base scheme.  
\end{abstract}
\section{Introduction}
The Buchsbaum-Rim multiplicity is a generalization of the Hilbert-Samuel multiplicity. While the Hilbert-Samuel multiplicity is a classical numerical invariant to study isolated singularities, the  Buchsbaum--Rim multiplicity is a modern algebraic tool to study singularities of higher codimensions. The importance and the geometric significance of the Buchsbaum-Rim multiplicity are due to seminal works of  Gaffney \cite{G92, G93} in the study of (Whitney) equisingularities. Kleiman has as well investigated the geometric meaning of Buchsbaum-Rim multiplicity and developed many aspects of the theory, e. g. \cite{KT96},\cite{K99} and \cite{K17}.

Though not surprising, it is not trivial  that the Buchsbaum-Rim multiplicity has explanations as to the Hilbert-Samuel multiplicity. Several properties of Hilbert-Samuel multiplicity are extended to the Buchsbaum-Rim one, for example {\it characterization of reduction} \cite{BR2}, \cite{KT96} and \cite{SUV}; {\it relation between multiplicity and reduction number} \cite{BUV}; Lech's inequality \cite{NW}; Projection formula and associativity formula \cite{K17}.  See as well,\cite{J}, \cite{CLU08}, and  \cite{BR3}.

In this paper, we describe the Buchsbaum-Rim multiplicity as the Euler characteristic (arithmetic genus) of special sheaves on a projective space over the base scheme. Yet, another geometric nature of this multiplicity. This is  an extension of  Serre's formula of Hilbert-Samuel multiplicity in which the multiplicity is described as the Euler characteristic of the  Koszul complex,  c.f. \cite[Theorem 4.7.4 and notes on page 203]{BH}.

Now, we  recall the definition of the Buchsbaum--Rim multiplicity. Throughout, we keep the notations defined in subsection \ref{Notations} below. Assume, besides, that $R$ is a Noetherian ring and $M\otimes_RL$ is a finite-length $R$-module.  Consider  the subalgebra $R[\gamma_1,\ldots, \gamma_f]$  of $S$. Buchsbaum and Rim \cite{BR2} show that the function
\begin{equation}\label{BRpoly}
P_{\Phi}(\nu,L):=\ell_R((\frac{S}{R[\gamma_1,\ldots, \gamma_f]})_{\nu}\otimes_RL)
\end{equation}
is eventually a polynomial function of $\nu$. Moreover,  in the case where $R$ is a Noetherian local ring, it is of degree $\Dim(L)+g-1$. The product of 
 $(\Dim(L)+g-1)!$ and the leading  coefficient  of $P_{\Phi}(\nu,L)$ is called the Buchsbaum-Rim multiplicity of $M$ with respect to $L$ and it is denoted by $e_L(M)$, $e(\Phi|L)$, $br_L(\Image({\Phi}))$, or $br(M,L)$. In the case where $L=R$, one may ignore the index $L$ in these notations.\footnote{In the original definition \cite[Pages 213,214]{BR2}, $e_L(M)$ is defined to be $(\Dim(R)+g-1)!$ times the coefficient of the  term of degree $(\Dim(R)+g-1)$ in $P_{\Phi}(\nu,L)$ when $M$ is a finite length module over the local ring  $R$. Clearly,  the later is zero if $\Dim(L)<\Dim(R)$, nevertheless,  the original definition provides the additivity  of the multiplicity function as in the classical Hilbert-Samuel case.    }

In one of the most difficult results of \cite{BR2}, Buchsbaum and Rim show in \cite[Theorem 4.2]{BR2}  that the difference function of  $P_{\Phi}(\nu,L)$ is indeed the Euler-Poincar{\'e} characteristic of a family of complexes. The first member of this family of complexes is, nowadays, called the Buchsbaum-Rim complex. This complex and the Eagon-Northcott complex are parts of another family of complexes which is called the generalized Koszul complex, after Kirby \cite{Kir1}; or Buchsbaum-Eisenbud family of complexes, after \cite{BE} (see also \cite[Appendix 2.6]{Eis}).

In \cite{BHa}, the first and the third named authors of this paper show that the family of Buchsbaum-Eisenbud complexes consists of  strands of a Koszul-\v{C}ech spectral sequence. With this tool in hand, in this paper, we show that the Euler-Poincar{\'e} characteristic of these complexes, when exists, is the Buchsbaum-Rim multiplicity, on one hand; and on the other hand, it is the alternating sum of the arithmetic genera of some Koszul homology modules.

The structure of the paper is as follows. In Section 2, we recall the result of \cite{BHa} about the Koszul-\v{C}ech spectral sequence. We choose the notation $\mathfrak{B}_\bullet(\Phi,L,\nu)$ to denote the complexes derived from this spectral sequence. These are the same as the generalized Koszul complex of Kirby, $K(\Phi,L,\nu)$; and the Buchsbaum-Eisenbud complex, $\mathcal{C}^{\nu}_{\bullet}$. In section 3, we (re)prove some of the basic properties of $\mathfrak{B}_\bullet(\Phi,L,\nu)$. Among them are the grade-sensitivity, in Corollary \ref{Cgrade}, and the $\Phi$-normality, in Proposition \ref{PHfinitelength}. These results are among the most important properties of $\mathfrak{B}_\bullet(\Phi, L,\nu)$- to witness, we cite \cite{LR}. The proofs we present here are shorter and more comprehensive in comparison with the classical proofs. Nonetheless, the main reason to put these results in this section is their roles in the proof of the main Theorem \ref{TSerre}. The main section of the paper is section 4. 
In Theorem \ref{TSerre}, we determine the relation between the Euler-Poincare characteristic of $\mathfrak{B}_\bullet(\Phi,L,\nu)$ and the Hilbert polynomials of Koszul homologies of symmetric algebra. This theorem is a vast generalization of Serre's theorem on the relation between the Hilbert-Samuel multiplicity and the length of Koszul homologies.  More precisely:
\smallskip

{\it {\bf Theorem 4.12.~}  Let $R$ be a Noetherian local ring, $M\otimes_RL$  a finite length $R$-module and  $H_j:=H_j(\G,S\otimes_RL)$ the $j$-th Koszul homology module.  Then  for any integer $\nu$
$$
P_{H_0}(\nu)-P_{H_1}(\nu)+\cdots+(-1)^fP_{H_f}(\nu) =
\left\{
\begin{array}{ccll}
br(M,L),&\text{~if $\Phi$ is  a parameter matrix for $L$,}\\\\
0,&\text{~otherwise}
\end{array}
\right.
$$
}
where $P_{H_j}(\nu)$ is the Hilbert polynomial of $H_j$. Clearly, God is in the details!

To establish this theorem, one needs a generalization of the concept of the Grothendieck-Serre formula (for the Hilbert functions) for projective space over any affine scheme instead of projective space over a closed point. Finally,  in Corollary \ref{CbrXi}  we justify the title of the paper.

\medskip
\begin{nota}\label{Notations}
In the paper we keep the following notations: 
 $R$ is a commutative ring of finite Krull dimension and $S=R[T_1,\cdots,T_g]$ is a standard  positively graded polynomial extension of $R$. $X=\Proj(S):=\mathbb{P}^{g-1}_{\Spec(R)}$.  For  a finitely generated graded $S$-module $\mathcal{H}$,  by $\Dim_X(\Supp(\mathcal{H}))$ we mean the maximum integer $n$ such that there exists a chain  of  $n+1$ homogeneous prime ideals in $S$ none of which  contains $\tt=(T_1,\cdots, T_g)$ the irrelevant ideal of $S$.  $\Phi=(c_{ij})$ is a $g\times f$ matrix with entries in $R$ and $f\geq g$.   $M:=\coker(\Phi)$ and $L$ is a finitely generated $R$-module.  $K_\bullet(\G;S)$ is the Koszul complex of $\{\gamma_j=\sum_{i=1}^gc_{ij}T_i\}_{j=1}^f$ over $S$ and $C^\bullet_{{\tt}}$ is the (algebraic) \v{C}ech complex of $\{T_1,\dots,T_g\}$ over $S$. 
\end{nota}


\section{Koszul-Cech spectral construction}\label{Sn}

We start with  the third quadrant  double complex $E^{-\bullet,-\bullet}:=K_\bullet(\G;S)\otimes_SC^\bullet_{{\tt}}$ with $K_0(\G)\otimes_SC^0_{\tt}$ at the origin and the Koszul terms on the rows.

The terms of the second page of the horizontal spectral sequence of $E^{\bullet,\bullet}$ are given by
\begin{equation}\label{Ehor}
^2E^{-p,-q}_{hor}=H^q_{{\tt}}(H_p(\G,S)).
\end{equation}
The first page of the vertical spectral sequence of $E^{\bullet,\bullet}$ is given by
$$
^1E^{-\bullet,-q}_{ver}=\left\{
\begin{array}{cl}
 0 & , \ \ q\neq g \\
 0 \rightarrow H^g_{{\tt}}(K_f(\G;S)) \rightarrow \cdots \rightarrow H^g_{{\tt}}(K_0(\G;S))\rightarrow 0	& , \ \ q=g  
\end{array}
\right.,
$$
and it follows that $^2E_{ver}={^\infty}E_{ver}$. Since $H^g_{{\tt}}(\bullet)\cong \bullet\otimes_SH^g_{{\tt}}(S)$, $^1E^{-\bullet,-g}_{ver}=K_\bullet(\G,H^g_{{\tt}}(S))$, the Koszul complex of $\gamma_1,\dots,\gamma_f$ with coefficients in $H^g_{{\tt}}(S)$. Thus $H_n({\rm Tot}_\bullet E^{\bullet, \bullet})\cong H_{n+g}(\G,H^g_{{\tt}}(S))$. Furthermore, since  $S$ is  a standard graded ring the inverse polynomial structure $H^g_{{\tt}}(S)\cong R[T_1^{-1},\dots,T_g^{-1}]$ implies that ${\rm end}(H^g_{{\tt}}(S))=-g$. It follows that, for $ \nu \leq f-g$, $(^1E_{ver})_{\nu}$ (the first page of the vertical spectral of $E$ on degree $\nu$) is given by
$$
 0 \rightarrow K_f(\G,H^g_{{\tt}}(S))_{\nu} \rightarrow \cdots \rightarrow K_{g+\nu+1}(\G,H^g_{{\tt}}(S))_{\nu} \stackrel{\delta_\nu}{\rightarrow} K_{g+\nu}(\G,H^g_{{\tt}}(S))_{\nu} \rightarrow 0
$$
 wherein ${\rm coker}(\delta_{\nu})=H_{g+\nu}(\G,H^g_{{\tt}}(S))_{\nu}$. 
 For $\nu>f-g$, $(^1E_{ver})_{\nu}=0$. As well in the case, the strand of the $K_\bullet(\G;S)$ at degree $\nu$ is the complex 
$$0 \rightarrow K_\nu(\G,S)_{\nu} \stackrel{(\partial_\nu)_{\nu}}{\rightarrow} K_{\nu-1}(\G,S)_{\nu} \rightarrow \cdots \rightarrow K_0(\G,S)_{\nu} \rightarrow 0$$
wherein 
\begin{equation}\label{Ehv}
H_\nu(\G,S)_{\nu}={\rm ker}(\partial_\nu)_{\nu}.
\end{equation}
The convergence of spectral sequences says that there exists a filtration of ${\rm Tot}_\bullet (E^{\bullet, \bullet})$ such that, in degree $\nu$,
\begin{equation}\label{EF1}
\cdots \subset F_1 \subset F_0=({\rm Tot}_\nu E)_{\nu}=(^\infty E^{-g-\nu,-g}_{ver})_{\nu}={\rm coker}(\delta_{\nu})
\end{equation}
and
$$
\frac{{\rm coker}(\delta_\nu)}{F_1}\cong (^\infty E^{-\nu,0}_{hor})_{\nu}.
$$
Thence we define the map $\tau_\nu:K_{g+\nu}(\G,H^g_{{\tt}}(S))_{\nu}\rightarrow K_\nu(\G,S)_{\nu}$ by the composition
\begin{equation}\label{Etau}
\xymatrix{
K_{g+\nu}(\G,H^g_{{\tt}}(S))_{\nu}\ar @{->>}[r] & {\rm coker}(\delta_\nu)\ar @{->>}[r] & \frac{{\rm coker}(\delta_\nu)}{F_1} \ar[r]^{\cong} & (^\infty E^{-\nu,0}_{hor})_{\nu}\ar @{^{(}->}[ddlll] \\
 & & & \\
 (^2E^{-\nu,0}_{hor})_{\nu} \ar @{=}[r] & H^0_{{\tt}}(H_\nu(\G;S))_{\nu}\ar @{^{(}->}[r] & H_\nu(\G;S)_{\nu} \ar @{^{(}->}[r] & K_\nu(\G;S)_{\nu}
}
\end{equation}
where the epimorphisms and monomorphisms are all canonical.  In particular,
\begin{equation}\label{Eimtau}
\Image(\tau_{\nu})=(^\infty E^{-\nu,0}_{hor})_{\nu}
\end{equation}

Now, for each $0\leq\nu\leq f-g$, by splicing the complexes $K_\bullet(\G,H^g_{{\tt}}(S))_{\nu}$ and $K_\bullet(\G,S)_{\nu}$ via $\tau_{\nu}$ we define the complex $\mathfrak{B}_\bullet(\Phi,\nu)$ to be 
\begin{equation}\label{EB=}
\begin{tikzcd}
\\
K_{\nu}(\G)_{\nu} \ar[r,"(\partial_\nu)_{\nu}"] \arrow[d, phantom, ""{coordinate, name=Z}]
& K_{\nu-1}(\G)_{\nu} \arrow[r]
&  \cdots \arrow[r] K_{0}(\G)_{\nu}&0  \\ 
0\to K_f(\G,H^g_{{\tt}}(S))_{\nu}\arrow[r]\cdots \arrow[r]&  K_{g+\nu+1}(\G,H^g_{{\tt}}(S))_{\nu} \arrow[r,"\delta_\nu" ] 
& K_{g+\nu}(\G,H^g_{{\tt}}(S))_{\nu}  \arrow[ull, "\tau_\nu"  description,rounded corners,
to path={--([xshift=2ex]\tikztostart.east)
|-(Z) [near end]\tikztonodes
-| ([xshift=-2ex]\tikztotarget.west)
-- (\tikztotarget)}]
\\
\end{tikzcd}
\end{equation}


For $\nu>f-g$, $\mathfrak{B}_\bullet(\Phi,\nu)$ is  $K_\bullet(\G,S)_{\nu}$, i.e.  

\begin{equation}\label{EB>}
0 \rightarrow K_\nu(\G,S)_{\nu} \stackrel{(\partial_\nu)_{\nu}}{\rightarrow} K_{\nu-1}(\G,S)_{\nu} \rightarrow \cdots \rightarrow K_0(\G,S)_{\nu} \rightarrow 0,
\end{equation}and
 for $\nu<0$ it is $K_\bullet(\G,H^g_{{\tt}}(S))_{\nu}$, i.e. 
\begin{equation}\label{EB<}
 0 \rightarrow K_f(\G,H^g_{{\tt}}(S))_{\nu} \rightarrow \cdots \rightarrow K_{g+\nu+1}(\G,H^g_{{\tt}}(S))_{\nu} \stackrel{\delta_\nu}{\rightarrow} K_{g+\nu}(\G,H^g_{{\tt}}(S))_{\nu} \rightarrow 0.
\end{equation}
According to Bou\c ca and Hassanzadeh \cite[Theorem 3.3]{BHa}, these complexes $\mathfrak{B}_\bullet(\Phi,\nu)$ are the same as the Buchsbaum-Eisenbud complexes \cite[Appendix 2]{Eis}, as well known as generalized Koszul complexes by Kirby  \cite{Kir1}.  One can further take coefficients in an $R$-module $L$ and define  $$\mathfrak{B}_\bullet(\Phi,L,\nu):=\mathfrak{B}_\bullet(\Phi,\nu)\otimes_RL.$$   

\section{Basic properties}
The complex $\mathfrak{B}_\bullet(\Phi,0)$ is the Eagon-Northcott complex of the matrix $\Phi$. In this case, ${\rm im}(\tau_0)={\rm Fitt}_0(M)=I_g(\Phi)\subseteq R$. Proposition \ref{Ptophom} is one of the main properties of the Eagon-Northcott complex. Here, regarding the viewpoint of $\mathfrak{B}_\bullet(\Phi,L,\nu)$, we not only present a new proof of vanishing of the last homology, but also we determine the set of associated primes. 
We keep the notations  as in \ref{Notations}.
\begin{prop}\label{Ptophom}
Let $R$ be a Noetherian ring.  Then for any 
$0\leq \nu\leq f-g$,
$$\Ass(H_{f-g+1}(\mathfrak{B}_\bullet(\Phi,L,\nu)))=\Supp(M)\cap\Ass(L)=\Ass({\rm Hom}_R(R/I_g(\Phi),L)).$$
\end{prop}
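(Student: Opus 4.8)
The plan is to compute the top homology $H_{f-g+1}(\mathfrak{B}_\bullet(\Phi,L,\nu))$ directly from the construction in Section \ref{Sn}. For $0\le\nu\le f-g$, the complex $\mathfrak{B}_\bullet(\Phi,L,\nu)$ is obtained by splicing $K_\bullet(\G,H^g_{\tt}(S))_\nu\otimes_R L$ with $K_\bullet(\G,S)_\nu\otimes_R L$ via $\tau_\nu$; the homological degree $f-g+1$ sits at the very top, namely at the term $K_f(\G,H^g_{\tt}(S))_\nu\otimes_R L$ (which lives in Koszul degree $f$ of the lower strand). So the first step is to identify $H_{f-g+1}$ with the kernel of the first map out of $K_f(\G,H^g_{\tt}(S))_\nu\otimes_R L$. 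Since $K_f(\G,-)$ is the top of a Koszul complex, this kernel is $H_f(\G,H^g_{\tt}(S)\otimes_R L)_\nu = H_f(\G, S\otimes_R H^g_{\tt}(R')\ldots)$; more usefully, $H_f(\G,-)\cong (0:_{-}(\gamma_1,\dots,\gamma_f))$, the annihilator submodule. Using $H^g_{\tt}(S)\cong R[T_1^{-1},\dots,T_g^{-1}]$ and the standard identification of the Koszul top homology with the submodule annihilated by the ideal $(\gamma_1,\dots,\gamma_f)$, I would show $H_{f-g+1}(\mathfrak{B}_\bullet(\Phi,L,\nu)) \cong \Hom_R(R/I_g(\Phi),L) $ up to a shift coming from the inverse-polynomial grading (the degree-$\nu$ piece, with $0\le\nu\le f-g$, being where this is nonzero); the point is that the ideal $(\gamma_1,\dots,\gamma_f)$ acting on $R[T_1^{-1},\dots,T_g^{-1}]\otimes_R L$ is, after unwinding, governed by $I_g(\Phi)$, because the $\gamma_j$ are the entries of $(T_1,\dots,T_g)\Phi$.

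Granting the isomorphism $H_{f-g+1}(\mathfrak{B}_\bullet(\Phi,L,\nu))\cong\Hom_R(R/I_g(\Phi),L)$, the remaining task is purely a statement about associated primes: $\Ass(\Hom_R(R/I_g(\Phi),L))=\Supp(M)\cap\Ass(L)=\Ass(\Hom_R(R/I_g(\Phi),L))$, where the last equality is trivially the same object, so really I must prove $\Ass(\Hom_R(R/I_g(\Phi),L))=\Supp(M)\cap\Ass(L)$. Recall $M=\coker(\Phi)$, so $\Supp(M)=\Var(\Fitt_0(M))=\Var(I_g(\Phi))$. The standard fact $\Ass(\Hom_R(R/\fa,L))=\Var(\fa)\cap\Ass(L)$ for a Noetherian ring $R$ then finishes it: a prime $\fp$ is associated to $\Hom_R(R/\fa,L)$ iff $\fp\supseteq\fa$ and $\fp\in\Ass(L)$, since $\Hom_R(R/\fa,L)=(0:_L\fa)$ and localizing, $(0:_L\fa)_\fp=(0:_{L_\fp}\fa_\fp)$ has $\fp$ associated iff $\depth_{\fa_\fp}L_\fp=0$ and $\fp\in\Ass(L)$. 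I would cite or quickly reprove this lemma.

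I expect the main obstacle to be the \emph{bookkeeping of the grading shift} in Step 1: one must check that in the relevant range $0\le\nu\le f-g$ the degree-$\nu$ strand $H_f(\G,H^g_{\tt}(S))_\nu$ really does recover all of $\Hom_R(R/I_g(\Phi),L)$ — i.e. that the inverse-polynomial module in the appropriate (negative) Koszul/internal degree pairs up with the given internal degree $\nu$ to leave exactly the socle-type module $(0:_L I_g(\Phi))$, independent of $\nu$ in that range. This requires the explicit description $\end(H^g_{\tt}(S))=-g$ and a small computation with the Koszul differential on $H^g_{\tt}(S)$, together with the observation (which the construction already encodes, cf. the discussion around \eqref{EB<} and \eqref{Eimtau}) that the splicing map $\tau_\nu$ does not affect the \emph{top} homology, only lower ones, so $H_{f-g+1}$ is computed entirely within the lower Koszul strand $K_\bullet(\G,H^g_{\tt}(S))_\nu\otimes_R L$. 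Once that identification is secure, the associated-primes computation is routine and the Proposition follows. I would also remark that this recovers the classical vanishing $H_{f-g+1}(\mathfrak{B}_\bullet(\Phi,L,\nu))=0$ precisely when $\grade(I_g(\Phi),L)\ge 1$, i.e. when $\Supp(M)\cap\Ass(L)=\varnothing$, recovering the depth-sensitivity of the Eagon--Northcott/Buchsbaum--Eisenbud complexes with coefficients.
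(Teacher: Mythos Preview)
Your overall architecture is right, but the central identification is wrong, and this is exactly the obstacle you flag but then resolve incorrectly. You assert that for every $0\le\nu\le f-g$ one has $H_{f-g+1}(\mathfrak{B}_\bullet(\Phi,L,\nu))\cong\Hom_R(R/I_g(\Phi),L)$. This is false for $\nu<f-g$ as soon as $g\ge 2$. A rank count already shows it: $K_f(\G,H^g_{\tt}(S\otimes_RL))_\nu\cong H^g_{\tt}(S\otimes_RL)_{\nu-f}$ is a copy of $L^{\binom{f-\nu-1}{g-1}}$, so the socle you are computing cannot sit inside a single copy of $L$ unless $\nu=f-g$. The slogan ``the ideal $(\gamma_1,\dots,\gamma_f)$ is governed by $I_g(\Phi)$'' is too coarse: the $\gamma_j$ generate $(\tt)\cdot\Image(\Phi)$ inside $S$, and the annihilator condition on $H^g_{\tt}(S\otimes_RL)$ in degree $\nu-f$ involves the full $\Sym_R(M)$-structure, not merely $I_g(\Phi)$.

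What the paper actually does is split into two cases. For $\nu=f-g$ your remark that $\tau_\nu$ plays no role is \emph{also} wrong: here the lower strand contributes only the single term $K_f(\G,H^g_{\tt}(S\otimes_RL))_{f-g}\cong L$, and the differential out of it \emph{is} $\tau_{f-g}$, which by \cite[Theorem 3.6]{BHa} is multiplication by the maximal minors; hence $H_{f-g+1}=(0:_LI_g(\Phi))=\Hom_R(R/I_g(\Phi),L)$. For $0\le\nu<f-g$ the top differential is the genuine Koszul map, so $H_{f-g+1}=H_f(\G,H^g_{\tt}(S\otimes_RL))_\nu$; now Koszul self-duality and the perfect pairing $S_i\otimes_R H^g_{\tt}(S\otimes_RL)_{-i-g}\to L$ give
\[
H_f(\G,H^g_{\tt}(S\otimes_RL))_\nu\;\cong\;\Hom_R\big((S/\G)_{f-g-\nu},\,L\big)\;=\;\Hom_R\big(\Sym_R^{\,f-g-\nu}M,\,L\big),
\]
which genuinely depends on $\nu$. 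Only at the level of associated primes does the $\nu$-dependence disappear: one uses $\Supp_R(\Sym_R^iM)=\Supp_R(M)=\Var(I_g(\Phi))$ for $i\ge 1$, together with the Bourbaki fact $\Ass(\Hom_R(N,L))=\Supp(N)\cap\Ass(L)$ that you correctly identify. So your Step~2 is fine, but Step~1 needs the duality with $\Sym_R(M)$ rather than a direct identification with $R/I_g(\Phi)$.
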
 
\begin{proof}
For $\nu=f-g$, $\mathfrak{B}_{f-g+1}(\Phi,L,\nu)=K_f(\G,H^g_{\tt}(S)\otimes_RL)_{f-g}\simeq L$. Since $\tau_{\nu}$ is given by multiplication into the generators of $I_g(\Phi)$ \cite[Theorem 3.6]{BHa}, we have $H_{f-g+1}(\mathfrak{B}_\bullet(\Phi,L,\nu)))={\rm Hom}_R(R/I_g(\Phi),L).$ We recall that the set of associate primes of the latter is $\Var(I_g(\Phi))\cap\Ass(L)$ according to the 
Bourbaki's result,\cite[Exercise 1.2.27]{BH},  here  $R$  must be Noetherian.

For $0\leq\nu\leq f-g-1$, we have $H_{f-g+1}(\mathfrak{B}_\bullet(\Phi,L,\nu))=H_f(\G,H^g_{{\tt}}(S\otimes_RL))_{\nu}$, i.e., the $\nu$-th component of Koszul homology of $\gamma_1,\dots,\gamma_f$ with coefficients in $H^g_{{\tt}}(S\otimes_RL)$. The self-duality of the Koszul complex yields
$$H_{f-g+1}(\mathfrak{B}_\bullet(\Phi,L,\nu))={\rm Hom}_S(S/\G,H^g_{{\tt}}(S\otimes_RL))(-f)_{\nu}.$$
From the perfect pairing given by the multiplication $S_{[i]}\otimes_RH^g_{{\tt}}(S\otimes_RL)_{[-i-g]}\rightarrow L$, we obtain the duality ${\rm Hom}_S(S/\G,H^g_{{\tt}}(S\otimes_RL))(-f)_{\nu}\cong {\rm Hom}_R((S/\G)_{[f-g-\nu]},L)$, see for example  \cite[Page 866]{Jou}. 
Regarding the presentation  $R^f\stackrel{\Phi}{\rightarrow}{R^g}\rightarrow M\rightarrow 0$, $S/(\G)\cong{\rm Sym}_RM$. Note that, $\Supp_R(M)=\Supp_R(\Sym_R^{i}M)$ for $i\geq 1$. Therefore,  for $0\leq\nu\leq f-g-1$
$$
\begin{array}{rcl}
 {\rm Ass}_R({\rm Hom}_R((S/\G)_{[f-g-\nu]},L)) & = & {\rm Ass}_R({\rm Hom}_R({\rm Sym}_R^{f-g-\nu}M,L)) \\
  & = & {\rm Supp}_R({\rm Sym}_R^{f-g-\nu}M)\cap{\rm Ass}_RL \\
 & = & {\rm Supp}_R(M)\cap{\rm Ass}_RL \\
 & = & V(R/{\rm Ann}M)\cap{\rm Ass}_RL \\
 & = & V(R/I_g(\Phi))\cap{\rm Ass}_RL \\
 & = & {\rm Ass}_R({\rm Hom}_R(R/I_g(\Phi),L)),
\end{array}
$$

\end{proof}

\begin{prop}\label{PHfinitelength}
 $\Supp_R(H_i(\mathfrak{B}_\bullet(\Phi,L,\nu)))\subseteq \Supp_R(M\otimes_R L)$ for any integers $i$ and $\nu$. 
\end{prop}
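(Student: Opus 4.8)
The plan is to localize and reduce to the case of a split epimorphism. First I would dispose of the ``$\Supp_R L$'' half for free: the support of a subquotient is contained in the support of the ambient module, and every term of $\mathfrak{B}_\bullet(\Phi,L,\nu)=\mathfrak{B}_\bullet(\Phi,\nu)\otimes_R L$ is a finite free $R$-module tensored with $L$, hence a finite direct sum of copies of $L$; so $\Supp_R H_i(\mathfrak{B}_\bullet(\Phi,L,\nu))\subseteq\Supp_R L$ for all $i$ and $\nu$ with no work. For the inclusion into $\Supp_R M=\Var(I_g(\Phi))$ (cf.\ \ref{Notations}), since support is detected at stalks I would fix $\mathfrak{p}\in\Spec R$ with $\mathfrak{p}\notin\Supp_R M$, i.e.\ $I_g(\Phi)_{\mathfrak{p}}=R_{\mathfrak{p}}$, equivalently $M_{\mathfrak{p}}=0$, equivalently $\Phi_{\mathfrak{p}}\colon R_{\mathfrak{p}}^{f}\to R_{\mathfrak{p}}^{g}$ a split epimorphism. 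Localization is exact and commutes with $H^{g}_{\tt}(-)$, so the Koszul--\v{C}ech construction of Section~\ref{Sn} commutes with $R\to R_{\mathfrak{p}}$ and $(\mathfrak{B}_\bullet(\Phi,L,\nu))_{\mathfrak{p}}\cong\mathfrak{B}_\bullet(\Phi_{\mathfrak{p}},\nu)\otimes_{R_{\mathfrak{p}}}L_{\mathfrak{p}}$. Thus everything reduces to the claim that, for a split epimorphism $\Phi$, the complex $\mathfrak{B}_\bullet(\Phi,\nu)$ is split exact for every $\nu$ — because a split exact complex of finite free modules remains exact after $-\otimes_{R_{\mathfrak{p}}}L_{\mathfrak{p}}$, giving $H_i(\mathfrak{B}_\bullet(\Phi,L,\nu))_{\mathfrak{p}}=0$, and hence, combined with the first half, $\Supp_R H_i\subseteq\Supp_R M\cap\Supp_R L=\Supp_R(M\otimes_R L)$.

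To prove that claim I would proceed as follows. Since $\mathfrak{B}_\bullet(\Phi,\nu)$ is a bounded complex of finite free $R$-modules (exterior powers of $R^{f}$ tensored with graded pieces of $S$ or of $H^{g}_{\tt}(S)$), a bounded exact such complex is automatically split exact, so it suffices to prove exactness. If $\Phi$ is a split epimorphism then $\gamma_1,\dots,\gamma_f$ span $S_1$ over $R$, whence $(\gamma_1,\dots,\gamma_f)=\tt$; therefore each Koszul homology $H_p(\G,S)$ and $H_p(\G,H^{g}_{\tt}(S))$ is annihilated by $\tt$, and a module killed by $\tt$ has vanishing higher local cohomology with respect to $\tt$, so the horizontal spectral sequence of Section~\ref{Sn}, with $E_2$-term $H^{q}_{\tt}(H_p(\G,S))$ (cf.\ \eqref{Ehor}), is concentrated on the row $q=0$ and degenerates there. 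Consequently $F_1=0$ in \eqref{EF1}, and reading off \eqref{Etau} identifies $\tau_\nu$ with the composite $K_{g+\nu}(\G;H^{g}_{\tt}(S))_\nu\twoheadrightarrow\coker(\delta_\nu)\cong H_\nu(\G;S)_\nu\hookrightarrow K_\nu(\G;S)_\nu$, where $H_\nu(\G;S)_\nu=\ker(\partial_\nu)_\nu$ by \eqref{Ehv} (this also matches \cite[Theorem~3.6]{BHa}: $\tau_\nu$ is multiplication by a generating set of $I_g(\Phi)=R$). It then remains to compute the Koszul homologies involved. Choosing a splitting of $\Phi$ and adapted bases — the resulting graded automorphism of $S$ and automorphism of $R^{f}$ carry $K_\bullet(\G;S)$, $C^{\bullet}_{\tt}$, the two spectral sequences and the maps $\delta_\nu,\tau_\nu$ along isomorphically — one may assume $\gamma_1=T_1,\dots,\gamma_g=T_g$ and $\gamma_{g+1}=\dots=\gamma_f=0$; then $K_\bullet(\G;S)$ is the Koszul complex of the $S$-regular sequence $T_1,\dots,T_g$ tensored with the zero-differential exterior algebra on the $f-g$ vanishing generators (placed in internal degree $1$), and likewise over $H^{g}_{\tt}(S)$, where one uses $H_i(T_1,\dots,T_g;H^{g}_{\tt}(S))=0$ for $i<g$ — this being, up to a shift, the graded $R$-dual of $H_{g-i}(T_1,\dots,T_g;S)$ via the perfect pairing appearing in the proof of Proposition~\ref{Ptophom}. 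A direct inspection of internal degrees then yields $H_p(\G,S)_\nu=0$ for $p\neq\nu$ (with $H_\nu(\G,S)_\nu\cong\bigwedge^{\nu}R^{f-g}$) and $H_p(\G,H^{g}_{\tt}(S))_\nu=0$ for $p\neq\nu+g$ (with $H_{\nu+g}(\G,H^{g}_{\tt}(S))_\nu\cong\bigwedge^{\nu}R^{f-g}$); feeding this into the strands \eqref{EB=}, \eqref{EB>}, \eqref{EB<} together with the description of $\tau_\nu$ above shows that $\mathfrak{B}_\bullet(\Phi,\nu)$ has zero homology in every degree, which proves the claim.

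For the cases $\nu\geq 0$ one could instead bypass the degree bookkeeping in the spliced range by invoking the grade-sensitivity (Corollary~\ref{Cgrade}): when $I_g(\Phi)=R$ one has $\grade I_g(\Phi)=\infty$, forcing $H_i(\mathfrak{B}_\bullet(\Phi,\nu))=0$ for $i\geq1$, while $H_0(\mathfrak{B}_\bullet(\Phi,\nu))=\Sym^{\nu}_R M=0$ (and $=R/I_g(\Phi)=0$ for $\nu=0$); only the strand cases $\nu<0$ and $\nu>f-g$ would still need the short Koszul computation. I expect the highlighted claim to be the main obstacle — concretely, the verification that in the split case nothing survives the splicing map $\tau_\nu$ in the range $0\leq\nu\leq f-g$; once it is established, the proposition follows formally.
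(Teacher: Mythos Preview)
Your argument is correct and follows essentially the same route as the paper: localize at $\mathfrak{p}\notin\Supp_R M$, use $(\boldsymbol{\gamma})_{\mathfrak{p}}=(\tt)_{\mathfrak{p}}$ to make the Koszul homologies $\tt$-torsion, collapse the horizontal spectral sequence to the row $q=0$ so that $F_1=0$ and $\Image(\tau_\nu)=\ker(\partial_\nu)_\nu$, and finish with the explicit identification $H_i(\boldsymbol{\gamma},S)_{\mathfrak{p}}\cong R_{\mathfrak{p}}^{\binom{f-g}{i}}(-i)$ (your coordinate change to $\gamma_j=T_j$, $\gamma_{j}=0$ is just a cleaner way to obtain the paper's equation \eqref{Lgt}); your preliminary reduction to $L=R$ via split-exactness of a bounded acyclic complex of finite free $R_{\mathfrak{p}}$-modules is a harmless simplification the paper does not make. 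One caveat: the shortcut you propose at the end via Corollary~\ref{Cgrade} is circular in this paper's development, since the proof of Corollary~\ref{Cgrade} invokes Proposition~\ref{PHfinitelength}.
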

\begin{proof}
Before proving the assertion of the proposition, we study some particular properties of Koszul homologies involved therein. Let $P$ be a prime ideal with $P\notin \Supp_R(M)$,  then
$$\left(\frac{S}{\G}\right)_P\cong({\rm Sym}_RM)_P\cong{\rm Sym}_{R_P}M_P=R_P,$$
since $M_P=0$. 
Thus, the ideal generated by  $\G_P$ is the same as that of ${\tt}_P$; moreover  $H_\bullet(\G_P,S_P\otimes_{R_P}L_P)\cong  H_\bullet({\tt}_P,S_P\otimes_{R_P}L_P)\otimes_{S_P}\wedge^\bullet S_P^{f-g} $. Since  $(T_1,...,T_g)$ is an $S\otimes_RL-$ regular sequence, we have 
\begin{equation}\label{Lgt}
\begin{array}{rcl}
 H_i(\G,S\otimes_{R}L)_P& \cong & H_i(\G_P,S_P\otimes_{R_P}L_P)\\
 & \cong &\sum_{j=0}^iH_j({\tt}_P,S_P\otimes_{R_P}L_P)\otimes_{S_P}\wedge^{i-j}S_P^{f-g} \\
 & = &  L_P\otimes_{S_P}S_P^{\binom{f-g}{i}}(-i)\\
 & \cong & L_P^{\binom{f-g}{i}}(-i), 
\end{array}
\end{equation}
for all $i$. We need the following facts out of Equation (\ref{Lgt})
\begin{equation}\label{E2p}
\left\{
\begin{array}{ccl}
(H_i(\G,S\otimes_{R}L)_P)_{\nu}= 0 & \text {if }i\neq \nu, \\
(H_i(\G,S\otimes_{R}L)_P)_{\nu}= 0 & \text {if }i>f-g; \text{~~and} \\
H_i(\G,S\otimes_{R}L)_P&  \text{is~} {\tt}_P\text{-torsion.} 
\end{array}
\right.
\end{equation}
 Then since $H_i(\G,S\otimes_{R}L)_P$ is ${\tt}_P\text{-torsion}$, for all $i$, the horizontal spectral sequence $^2E^{-p,-q}_{hor}$ defined in (\ref{Ehor})  collapses at the first row.  Therefore, by the convergence of spectral sequences, it follows that for any integer $i$
\begin{equation}\label{ELp}
\begin{array}{rcl}
H_i(\G,H^g_{{\tt}}(S\otimes_RL))_P & \cong & H^0_{{\tt}_P}(H_{i-g}(\G_P,S_P\otimes_{R_P}L_P))  \\
  & \cong & H_{i-g}(\G_P,S_P\otimes_{R_P}L_P)\\
  & = & H_{i-g}(\G,S\otimes_{R}L)_P.
\end{array}
\end{equation}
We now return to the assertion of the proposition. Let $P$ be a prime ideal. 
If $P\notin \Supp_R(L)$, then obviously $H_i(\mathfrak{B}_\bullet(\Phi,L,\nu))_P = 0.$ If $P\notin \Supp_R(M)$, then we have all of the above mentioned properties. Now, consider three cases.

If $\nu\geq f-g+1$, then, according to (\ref{EB>}), $H_i(\mathfrak{B}_\bullet(\Phi,L,\nu)_P=(H_i(\boldsymbol{\gamma},S\otimes_RL)_P)_{\nu}$ for all $i$. Thence (\ref{E2p}) shows that $H_i(\mathfrak{B}_\bullet(\Phi,L,\nu)_P=0$  for all $i$.

If $\nu\leq -1$, then, according to (\ref{EB<}), $H_i(\mathfrak{B}_\bullet(\Phi,L,\nu)_P)=(H_{\nu+g+i}(\boldsymbol{\gamma},H^g_\tt(S\otimes_RL))_P)_{\nu}$. Thence (\ref{ELp}) shows that $H_i(\mathfrak{B}_\bullet(\Phi,L,\nu)_P\cong H_{i+v}(\boldsymbol{\gamma},S\otimes_RL)_P)_{\nu}$ which is trivially zero  for $0\leq i<-\nu$, and it is zero for $i\geq -\nu$ due to (\ref{E2p}).

If $0\leq\nu\leq f-g$. Then according to (\ref{EB=}),
\begin{equation}\label{BLp}
H_i(\mathfrak{B}_\bullet(\Phi,L,\nu)_P) = \left\{
\begin{array}{ccl}
(H_i(\G,S\otimes_RL)_P)_{\nu} & , & 0\leq i\leq \nu-1;\\
(H_{i-1+g}(\G,H^g_{{\tt}}(S\otimes_RL))_P)_{\nu} & , & \nu+2\leq i\leq f-g+1.
\end{array}
\right.
\end{equation}
Hence (\ref{E2p}) and (\ref{ELp}) yield $H_i(\mathfrak{B}_\bullet(\Phi,L,\nu)_P)=0$, for $i\neq \nu,\nu+1$.

The map $(\tau_\nu)_P\otimes_{R_P}1_{L_P}:\mathfrak{B}_{\nu+1}(\Phi,L,\nu)_P \rightarrow \mathfrak{B}_\nu(\Phi,L,\nu)_P$ comes from the composition
$$
\xymatrix{
(K_{g+\nu}(\G,H^g_{{\tt}}(S\otimes_{R}L))_P)_{\nu}\ar @{->>}[r] & {\rm coker}((\delta_\nu)_P\otimes_{R_P}1_{L_P})\ar @{->>}[r] & \frac{{\rm coker}((\delta_\nu)_P\otimes_{R_P}1_{L_P})}{F_1^{L_P}} \ar[ddll]^{\cong}  \\
 & &  \\
(^\infty E^{-\nu,0}_{hor}(L)_P)_{\nu}\ar @{^{(}->}[r] &
 (^2E^{-\nu,0}_{hor}(L)_P)_{\nu} \ar @{^{(}->}[r] & (K_\nu(\G,S\otimes_{R}L)_P)_{\nu}. &  
}
$$ 
where the module $F_1^{L_P}$ is given by the convergence of the spectral sequence of the double complex $E(L)_P=(K_\bullet(\G,S\otimes_{R}L)\otimes_{S}C^\bullet_{{\tt}})_P$ in degree $\nu$. 

There is a module $\mathfrak{F}_1^{L_P}\subseteq \mathfrak{B}_{\nu+1}(\Phi,L,\nu)_P$ such that $F_1^{L_P}=\mathfrak{F}_1^{L_P}/{\rm im}((\tau_\nu)_P\otimes_{R_P}1_{L_P})$, and thus ${\rm ker}((\tau_\nu)_P\otimes_{R_P}1_{L_P})=\mathfrak{F}_1^{L_P}$. It follows that $H_{\nu+1}(\mathfrak{B}_\bullet(\Phi,L,\nu)_P)=F_1^{L_P}$. Note that (\ref{ELp}) implies $F_1^{L_P}=0$, since ${\rm coker}((\delta_\nu)_P\otimes_{R_P}1_{L_P})=(H_{\nu+g}(\G,H^g_{{\tt}}(S\otimes_RL))_P)_{\nu}\cong(H_\nu(\G,S\otimes_RL)_P)_{\nu}$,
and therefore, ${\rm ker}((\tau_\nu)_P\otimes_{R_P}1_{L_P})={\rm im}((\delta_\nu)_P\otimes_{R_P}1_{L_P})$ and $H_{\nu+1}(\mathfrak{B}_\bullet(\Phi,L,\nu)_P)=0.$ Finally, 
$$
\begin{array}{rcl}
{\rm im}((\tau_\nu)_P\otimes_{R_P}1_{L_P}) & = & {\rm coker}((\delta_\nu)_P\otimes_{R_P}1_{L_P}) \\
 & \cong & (H_\nu(\G,S\otimes_RL)_P)_{\nu}\\
 & = & {\rm ker}((\partial_\nu)_P\otimes_{R_P}1_{L_P})_{\nu},
\end{array}
$$
which implies that $H_\nu(\mathfrak{B}_\bullet(\Phi,L,\nu)_P)=0$.
\end{proof}
Besides the main role of Proposition \ref{PHfinitelength} in Theorem \ref{TxiB}, we prove the grade-sensitivity as an application of this proposition for the sake of completeness.

\begin{cor}\label{Cgrade}
Suppose that $R$ is Noetherian, $M\otimes_RL\neq 0$ and  $0\leq\nu< f-g$. Then 
$$\grade_R(I_g(\Phi),L) = \min\{i; H_{f-g+1-i}(\mathfrak{B}_\bullet(\Phi,L,\nu))\neq 0\}.$$
Moreover, for $\frak{g}=\grade_R(I_g(\Phi),L)$, $$H_{f-g+1-\frak{g}}(\mathfrak{B}_\bullet(\Phi,L,\nu)) \simeq \Ext_R^{\frak{g}}(\Sym_R^{f-g-\nu}(M), L).$$ 
\end{cor}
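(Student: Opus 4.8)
The plan is to leverage Proposition \ref{PHfinitelength}, which tells us that every homology module $H_i(\mathfrak{B}_\bullet(\Phi,L,\nu))$ is supported inside $\Supp_R(M\otimes_R L)=\Var(I_g(\Phi))\cap\Supp_R(L)$, together with the identification of the top homology from Proposition \ref{Ptophom}. The key observation is that for $0\le\nu<f-g$, the complex $\mathfrak{B}_\bullet(\Phi,L,\nu)$ is a finite complex of finitely generated \emph{free} $R$-modules (each $K_j(\G)_\nu$ and $K_j(\G,H^g_{\tt}(S))_\nu$ is a finite free $R$-module since $S$ is a polynomial ring over $R$), so I am in the setting of the standard acyclicity/grade-sensitivity machinery. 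First I would record that $\mathfrak{B}_\bullet(\Phi,L,\nu)\simeq \mathfrak{B}_\bullet(\Phi,\nu)\otimes_R L$, and that after tensoring with $L$ one still has a complex of (not necessarily free, but) modules whose homology is controlled; the cleanest route is to first treat $L=R$ and then tensor, or to work directly with the free resolution aspect of $\mathfrak{B}_\bullet(\Phi,\nu)$.

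The main technical step is to show that $\mathfrak{B}_\bullet(\Phi,\nu)$, for $0\le\nu<f-g$, is a free resolution of its top homology $\coker(\tau_\nu)\cong \Sym_R^{f-g-\nu}(M)$ after localizing away from $\Var(I_g(\Phi))$ — indeed Proposition \ref{PHfinitelength} (applied with $L=R$) shows $H_i(\mathfrak{B}_\bullet(\Phi,\nu))_P=0$ for all $i$ and all $P\notin\Supp_R(M)$, so the complex, truncated at the top, is a length-$(f-g+1)$ complex of free modules which is exact away from $\Var(I_g(\Phi))$. I would then invoke the Buchsbaum--Eisenbud acyclicity criterion (or, equivalently, the Peskine--Szpiro acyclicity lemma / the ``lemme d'acyclicité'') in the form: for a finite complex $F_\bullet$ of free $R$-modules with $H_0$-part $N$ and $\Ht$ of the non-free locus equal to $\ell$, one has $\depth$-type bounds forcing $H_i=0$ for $i> \ell$-related range, and moreover $H_{\text{top nonvanishing}}$ is an $\Ext$ of $N$. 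Concretely, reindex so that $\mathfrak{B}_\bullet(\Phi,\nu)$ computes a partial free resolution of $\Sym_R^{f-g-\nu}M$ from homological degree $0$ up through $f-g+1$; then dualizing into $L$ and using that the complex is a free resolution of $\Sym_R^{f-g-\nu}M$ up to the relevant spot (which holds because $\grade(I_g(\Phi),R)$ governs where $\mathfrak{B}_\bullet(\Phi,\nu)$ stops being exact, by the same acyclicity criterion applied over $R$), we get $H_{f-g+1-i}(\mathfrak{B}_\bullet(\Phi,\nu)\otimes_R L)\cong \Ext_R^i(\Sym_R^{f-g-\nu}M,L)$ in the appropriate range, and these $\Ext$'s vanish precisely for $i<\grade_R(I_g(\Phi),L)=\grade_R(\Ann\Sym_R^{f-g-\nu}M,L)$ and are nonzero at $i=\grade$.

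More precisely, here is the order of steps. (1) Observe $\mathfrak{B}_\bullet(\Phi,\nu)$ is a complex of finite free $R$-modules of length $f-g+1$; reverse the indexing so $\partial$ goes ``down'' to $\mathfrak{B}_0$. (2) Using Proposition \ref{PHfinitelength} with $L=R$, show $\mathfrak{B}_\bullet(\Phi,\nu)$ is exact except at the top after localizing at any $P\notin\Var(I_g(\Phi))$; hence $\mathfrak{B}_\bullet(\Phi,\nu)$ is a free resolution of $N:=H_{\text{top}}(\mathfrak{B}_\bullet(\Phi,\nu))$ localized away from $\Var(I_g(\Phi))$, and by Proposition \ref{Ptophom}'s computation (or \cite[Theorem 3.6]{BHa}) $N\cong\Sym_R^{f-g-\nu}M$. (3) Apply the acyclicity criterion to conclude that, over $R$ itself, $\mathfrak{B}_\bullet(\Phi,\nu)$ is exact in homological degrees $1,\dots,\grade_R(I_g(\Phi),R)-1$ from the top — i.e. it is the truncation of a free resolution of $\Sym_R^{f-g-\nu}M$. (4) Tensor with $L$ and use the standard fact that $H^i$ of $\Hom$ or, here, the homology of $F_\bullet\otimes L$ where $F_\bullet$ is a partial free resolution of $N$, computes $\Tor$/$\Ext$: concretely, since $\mathfrak{B}_\bullet(\Phi,\nu)$ is a free resolution of $\Sym_R^{f-g-\nu}M$ in degrees $0,\dots,\frak g$, the complex $\mathfrak{B}_\bullet(\Phi,L,\nu)$ has $H_{f-g+1-i}=\Ext_R^i(\Sym_R^{f-g-\nu}M,L)$ for $i\le\frak g$ — wait, one must be careful: tensoring a free resolution with $L$ computes $\Tor$, not $\Ext$; the $\Ext$ appears because $\mathfrak{B}_\bullet$ is \emph{already} obtained by dualizing, i.e. the splicing via $\tau_\nu$ realizes $\mathfrak{B}_\bullet(\Phi,L,\nu)$ as $\Hom_R(G_\bullet,L)$ for a free resolution $G_\bullet$ of $R/I_g(\Phi)$-type module, matching the pattern of the Eagon--Northcott/Buchsbaum--Rim self-duality. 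So step (4) is: identify $\mathfrak{B}_\bullet(\Phi,L,\nu)$ with $\Hom_R(G_\bullet, L)$ where $G_\bullet$ is the free resolution of $\Sym_R^{f-g-\nu}M$ provided by $\mathfrak{B}_\bullet(\Phi,\nu)$ (via its self-dual structure, \cite[Appendix 2]{Eis}), and read off $H_{f-g+1-i}(\mathfrak{B}_\bullet(\Phi,L,\nu))=\Ext_R^i(\Sym_R^{f-g-\nu}M,L)$. Then $\grade_R(I_g(\Phi),L)=\grade_R(\Ann_R(\Sym_R^{f-g-\nu}M),L)$ since these ideals have the same radical, and the characterization of $\grade$ as the least nonvanishing $\Ext$ finishes both the ``$\min$'' formula and the isomorphism at the extremal index.

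The step I expect to be the main obstacle is step (2)--(3): rigorously promoting the pointwise-away-from-$\Var(I_g(\Phi))$ exactness of Proposition \ref{PHfinitelength} to the conclusion that $\mathfrak{B}_\bullet(\Phi,\nu)$ is genuinely the truncation of a free resolution of $\Sym_R^{f-g-\nu}M$ over $R$, with the correct grade bound, and in particular correctly matching the homological shift so that the ``$f-g+1-i$'' indexing and the self-duality identifying the complex with $\Hom_R(-,L)$ of a free resolution are consistent. This requires the Buchsbaum--Eisenbud exactness criterion (applicable since everything is finite free over Noetherian $R$) plus bookkeeping of the self-dual splicing in \eqref{EB=}; once that is set up correctly, the passage to general $L$ and the $\Ext$ identification are formal.
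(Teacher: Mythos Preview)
Your self-duality idea is natural, but steps (3)--(4) as written contain a genuine gap, not just bookkeeping. Two issues: first, $\mathfrak{B}_\bullet(\Phi,\nu)$ has $H_0=\Sym_R^{\nu}M$, not $\Sym_R^{f-g-\nu}M$; under the duality $(\mathfrak{B}_\bullet(\Phi,\nu))^*\cong\mathfrak{B}_\bullet(\Phi,f-g-\nu)$ it is $G_\bullet:=\mathfrak{B}_\bullet(\Phi,f-g-\nu)$ whose $H_0$ is $\Sym_R^{f-g-\nu}M$, and one has $H_{f-g+1-i}(\mathfrak{B}_\bullet(\Phi,L,\nu))\cong H^i(\Hom_R(G_\bullet,L))$. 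Second, and more seriously, for $H^i(\Hom_R(G_\bullet,L))$ to compute $\Ext^i_R(\Sym_R^{f-g-\nu}M,L)$ for $i\le\mathfrak g$, you need $H_j(G_\bullet)=0$ for $1\le j\le\mathfrak g$. The Buchsbaum--Eisenbud and Peskine--Szpiro acyclicity criteria do not furnish such partial exactness; and any acyclicity statement you could extract over $R$ would be governed by $\grade_R(I_g(\Phi),R)$, which bears no a priori relation to $\mathfrak g=\grade_R(I_g(\Phi),L)$ (take $R$ of depth zero and $L$ of positive depth). So the step ``$G_\bullet$ is a truncation of a free resolution'' fails in general.

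Your route can in fact be completed, but with a different tool: the hyper-$\Ext$ spectral sequence $E_2^{p,q}=\Ext^p_R(H_q(G_\bullet),L)\Rightarrow H^{p+q}(\Hom_R(G_\bullet,L))$, together with Proposition~\ref{PHfinitelength} (so every $H_q(G_\bullet)$ is supported on $V(I_g(\Phi))$, whence $E_2^{p,q}=0$ for $p<\mathfrak g$) gives the vanishing for $i<\mathfrak g$ and identifies the surviving edge term $E_2^{\mathfrak g,0}=\Ext^{\mathfrak g}_R(\Sym_R^{f-g-\nu}M,L)$. The paper takes a more elementary path that avoids both self-duality and hyper-$\Ext$: it inducts on $m=\min\{i:H_{f-g+1-i}\neq 0\}$. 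The base case $m=0$ is read off from the explicit description of the top homology in Proposition~\ref{Ptophom}. For $m>0$ one picks an $L$-regular $x\in I_g(\Phi)$; since Proposition~\ref{PHfinitelength} gives $x^sH_\bullet(\mathfrak{B}_\bullet(\Phi,L,\nu))=0$ for some $s$, the long exact sequence of $0\to L\xrightarrow{x^s}L\to L/x^sL\to 0$ splits into short exact sequences, reducing $m$ by one, and Rees's formula yields the $\Ext$ identification. This is the standard grade-sensitivity d\'evissage, and it sidesteps precisely the obstacle you flagged.
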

\begin{proof}
We prove by using induction on $m:=\min\{i; H_{f-g+1-i}(\mathfrak{B}_\bullet(\Phi,L,\nu))\neq 0\}.$ 

Let $m=0$. As we see in the course of the proof of  Proposition \ref{Ptophom}, $$H_{f-g+1}(\mathfrak{B}_\bullet(\Phi,L,\nu))={\rm Hom}_S(S/\G,H^g_{{\tt}}(S\otimes_RL))(-f)_{\nu}\cong {\rm Hom}_R((S/\G)_{[f-g-\nu]},L).$$

The latter is isomorphic to 
$$ \Hom_R(\Sym_R^{f-g-\nu}(M),L).$$
We also notice that, according to Proposition \ref{Ptophom}, ${\rm Hom}_R(R/I_g(\Phi),L)\neq 0$; so that $$\grade_R(I_g(\Phi),L)=0.$$
Now, suppose that $m>0.$ According to Proposition \ref{Ptophom}, $\grade_R(I_g(\Phi),L)>0.$ By Proposition \ref{PHfinitelength}, there exist an integer $s$ and an  $L$-regular element $x\in I_g(\Phi)$ such that $x^sH_\bullet(\mathfrak{B}(\Phi,L,\nu))=x^sM= 0$.  Hence, the exact sequence $$0\to L\stackrel{x^s}{\rightarrow} L \to L/x^sL\to 0,$$
yields 
 the following short exact sequences  for any $i$,
 $$0\to H_{f-g+1-i}(\mathfrak{B}_{\bullet}(\Phi,L,\nu))\to H_{f-g+1-i}(\mathfrak{B}_{\bullet}(\Phi,L/x^sL,\nu))\to H_{f-g-i}(\mathfrak{B}_{\bullet}(\Phi,L,\nu))\to 0.$$
Considering the values $i<m$ the equality about grade follows.    For the second assertion, we set $i=m$ and apply the induction hypothesis. We have 

$$
\begin{array}{rcl}
H_{f-g+1-m}(\mathfrak{B}_\bullet(\Phi,L,\nu)) &\cong & H_{f-g-m}(\mathfrak{B}_\bullet(\Phi,L/x^sL,\nu)) \\
 & \cong & \Ext_R^{m-1}(\Sym_R^{f-g-\nu}(M),L/x^sL)\\
 & \cong & \Ext_R^m(\Sym_R^{f-g-\nu}(M),L).
\end{array}
$$ 

\end{proof}

\begin{rmk} We mention some points about Corollary \ref{Cgrade}:
\begin{itemize}
\item{The last isomorphism  in the proof  is the well-known Rees formula, see \cite[Lemma 1.2.4]{BH}. To use this formula, one needs $x^s\Sym_R^{f-g-\nu}(M)=0$; so that necessarily $\nu<f-g$; However, in the case where $\nu=f-g$, with essentially the same proof, one   shows that  $$H_{f-g+1-{\frak{g}}}(\mathfrak{B}_\bullet(\Phi,L,\nu)) \simeq \Ext_R^{\frak{g}}(H_0(\mathfrak{B}_\bullet(\Phi,\nu)),L)$$  for ${\frak{g}}=\grade_R(I_g(\Phi),L)$.}
\item{This Corollary  proves the Eagon's classical result that $$\grade_R(I_g(\Phi),L)\leq f-g+1;$$}
\item{This Corollary  shows that  $\mathfrak{B}_\bullet(\Phi,L,\nu))$ is acyclic  for all $0\leq \nu< f-g$, if and only if $\mathfrak{B}_\bullet(\Phi,L,\nu))$ is acyclic  for some $0\leq \nu< f-g$, if and only if $\grade_R(I_g(\Phi),L)= f-g+1$. A key property of the family. See also \cite[Theorem A2.10]{Eis}.}
\end{itemize}
\end{rmk}

\section{The Euler-Poincar\'e characteristic }
In \cite[Theorem 4.2]{BR2} the authors show that the Buchsbaum-Rim multiplicity is the Euler characteristic of  $\mathfrak{B}_{\bullet}(\Phi, L,0)$. The goal of this section is to use the convergence of spectral  sequence of $E^{\bullet,\bullet}$, defined in the first section,  to write the Euler characteristic of $\mathfrak{B}_{\bullet}(\Phi, L,\nu)$ in terms of Hilbert polynomials of Koszul homologies.

The key result is Theorem \ref{TxiB}. To prove this theorem, we need some preparatory lemmas. Some of these results are due to the necessity to  generalize the concept of the Grothendieck-Serre formula (for  Hilbert functions) for projective space over an affine scheme instead of projective space over a closed point.
\begin{defi}\label{Lxi}
Let $C_\bullet : 0\rightarrow C_k \rightarrow \cdots \rightarrow C_1 \stackrel{d_1}{\rightarrow} C_0 \rightarrow 0$ be a finite  complex  with finite length homology modules. 
The alternating sum $\sum_{i=0}^k(-1)^i\ell_R(H_i(C_\bullet))$ is called the Euler characteristic of complex $C_\bullet$ and it is denoted by $\chi(H_{\bullet}(C_{\bullet}))$.
\end{defi}

 
 In the sequel we will use the following notations:

\begin{defi}\label{Drho}  Assume that $\mathcal{H}$ is a finitely generated graded $S$-module such that the sheaf cohomology modules $H^i(X,{\tilde {\mathcal{H}}}(\nu))$
are  finite length $R$-modules for all $i$ and $\nu$. We define $$h^i_{\mathcal{H}}(\nu):=\ell_R(H^i(X,\tilde{\mathcal{H}}(\nu)))$$ and $$\rho_{\mathcal{H}}(\nu):=\sum_{i=0}^{g-1}(-1)^ih^i_{\mathcal{H}}(\nu).$$ 
\end{defi}
Using a usual technique for long exact sequence, the next Lemma follows.
\begin{lemma}\label{Lshort}  Let $\mathcal{H}_1$,$\mathcal{H}_2$ and $\mathcal{H}_3$ be three finitely generated graded $S$-module such that all of the sheaf cohomology modules $H^i(X,{\tilde {\mathcal{H}_j}}(\nu))$
are finite length $R$-modules for all $i$, $j$ and $\nu$. If $\mathcal{H}_1$,$\mathcal{H}_2$ and $\mathcal{H}_3$ suit into a short exact sequence $0\ra \mathcal{H}_1\ra \mathcal{H}_2\ra \mathcal{H}_3\ra 0$ then for any integer $\nu$, $$\rho_{\mathcal{H}_2}(\nu)=\rho_{\mathcal{H}_1}(\nu)+\rho_{\mathcal{H}_3}(\nu).$$
\end{lemma}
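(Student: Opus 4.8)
The plan is to deduce this additivity statement from the long exact sequence in sheaf cohomology. Given a short exact sequence $0\ra \mathcal{H}_1\ra \mathcal{H}_2\ra \mathcal{H}_3\ra 0$ of finitely generated graded $S$-modules, sheafification is exact, so for each integer $\nu$ we get a short exact sequence of sheaves $0\ra \tilde{\mathcal{H}_1}(\nu)\ra \tilde{\mathcal{H}_2}(\nu)\ra \tilde{\mathcal{H}_3}(\nu)\ra 0$ on $X=\mathbb{P}^{g-1}_{\Spec(R)}$. First I would invoke the associated long exact sequence in sheaf cohomology:
$$
0\ra H^0(X,\tilde{\mathcal{H}_1}(\nu))\ra H^0(X,\tilde{\mathcal{H}_2}(\nu))\ra H^0(X,\tilde{\mathcal{H}_3}(\nu))\ra H^1(X,\tilde{\mathcal{H}_1}(\nu))\ra\cdots
$$
Since $X$ has relative dimension $g-1$ over the affine base, the cohomology vanishes above degree $g-1$, so this is a finite exact sequence of $R$-modules, each of which is of finite length by hypothesis.

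The second step is purely homological: in any finite exact sequence of finite-length $R$-modules, the alternating sum of lengths is zero. Writing out the long exact sequence above grouped by the index $j\in\{1,2,3\}$ and collecting terms, the alternating-sum-zero identity reads
$$
\sum_{i=0}^{g-1}(-1)^i\bigl(h^i_{\mathcal{H}_1}(\nu)-h^i_{\mathcal{H}_2}(\nu)+h^i_{\mathcal{H}_3}(\nu)\bigr)=0,
$$
which is exactly $\rho_{\mathcal{H}_1}(\nu)-\rho_{\mathcal{H}_2}(\nu)+\rho_{\mathcal{H}_3}(\nu)=0$, i.e. $\rho_{\mathcal{H}_2}(\nu)=\rho_{\mathcal{H}_1}(\nu)+\rho_{\mathcal{H}_3}(\nu)$. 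One small bookkeeping point: the connecting map $H^{g-1}(X,\tilde{\mathcal{H}_3}(\nu))\ra H^{g}(X,\tilde{\mathcal{H}_1}(\nu))=0$, so the sequence genuinely terminates and the alternating sum is finite with no leftover tail.

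I do not expect a serious obstacle here; the only thing to be careful about is justifying that the alternating sum of lengths vanishes for an exact sequence that is not assumed a priori to be bounded on the left — but it is bounded because each $\tilde{\mathcal{H}_j}(\nu)$ has no cohomology above degree $g-1$ and $H^0$ is left exact, so the sequence starts at $i=0$ and ends at $i=g-1$ for all three modules. The finiteness of all the lengths involved is handed to us in the hypotheses of the lemma, so the alternating-sum argument (an easy induction on the length of an exact complex, or additivity of length on short exact sequences) applies verbatim. This is exactly the "usual technique for long exact sequence" alluded to in the text preceding the statement.
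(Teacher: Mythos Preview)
Your proof is correct and is exactly the ``usual technique for long exact sequence'' that the paper invokes in lieu of a written proof. There is nothing to add.
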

\begin{lemma}\label{Lrho} Let $R$ be a Noetherian ring and assume that $\mathcal{H}$ is a finitely generated graded $S$-module such that the sheaf cohomology modules $H^i(X,\tilde {\mathcal{H}}(\nu))$ are  finite length $R$-modules for all $i$ and $\nu$. Then the function $\rho_{\mathcal{H}}:\mathbb{N}\to \mathbb{Z}$, defined in Definition \ref{Drho}, is a polynomial function of degree $\Dim_X(\Supp(\mathcal{H}))$ with positive leading coefficient.

\end{lemma}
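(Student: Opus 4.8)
The plan is to prove Lemma~\ref{Lrho} by relating $\rho_{\mathcal{H}}$ to the ordinary Hilbert polynomial over the polynomial ring $S$, using the local cohomology exact sequence that connects $H^i(X,\tilde{\mathcal{H}}(\nu))$ with the graded local cohomology $H^{i}_{\tt}(\mathcal{H})$. Concretely, for each $\nu$ there is the standard exact sequence
$$
0\to H^0_{\tt}(\mathcal{H})_\nu \to \mathcal{H}_\nu \to \bigoplus_{i}H^0(X,\tilde{\mathcal{H}}(\nu)) \to H^1_{\tt}(\mathcal{H})_\nu \to 0
$$
together with isomorphisms $H^i(X,\tilde{\mathcal{H}}(\nu))\cong H^{i+1}_{\tt}(\mathcal{H})_\nu$ for $i\geq 1$. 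Taking the alternating sum of lengths and using that $\ell_R(\mathcal{H}_\nu)$ is, for $\nu\gg 0$, the Hilbert polynomial $P_{\mathcal{H}}(\nu)$ of $\mathcal{H}$ as a graded $S$-module (which requires $\ell_R(\mathcal{H}_\nu)<\infty$, a consequence of the finite-length hypothesis on the cohomologies plus Serre), one obtains
$$
\rho_{\mathcal{H}}(\nu) = \ell_R(\mathcal{H}_\nu) - \sum_{i\geq 0}(-1)^i\ell_R(H^i_{\tt}(\mathcal{H})_\nu).
$$
The correction term on the right is eventually zero: each $H^i_{\tt}(\mathcal{H})$ is Artinian in the appropriate sense and, crucially, $\mathrm{end}(H^i_{\tt}(\mathcal{H}))<\infty$, so for $\nu\gg 0$ all the correction terms vanish and $\rho_{\mathcal{H}}(\nu)=P_{\mathcal{H}}(\nu)$ agrees with the Hilbert polynomial. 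Hence $\rho_{\mathcal{H}}$ is eventually polynomial; to conclude it \emph{is} a polynomial function on all of $\mathbb{N}$ I would run the by-now-standard argument with the difference operator, or simply note that $\rho_{\mathcal{H}}$ is already defined as an alternating sum of cohomology lengths and invoke Lemma~\ref{Lshort} reductions, but the cleanest route is the equality with $P_{\mathcal{H}}$ for large $\nu$.

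For the degree and leading-coefficient statement I would argue by reduction to the well-understood graded situation. Filter $\mathcal{H}$ by a prime filtration $0=\mathcal{H}^0\subset \mathcal{H}^1\subset\cdots\subset\mathcal{H}^t=\mathcal{H}$ with $\mathcal{H}^k/\mathcal{H}^{k-1}\cong (S/\mathfrak{p}_k)(a_k)$ for homogeneous primes $\mathfrak{p}_k$. By Lemma~\ref{Lshort}, $\rho_{\mathcal{H}}=\sum_k \rho_{(S/\mathfrak{p}_k)(a_k)}$, so it suffices to treat $\mathcal{H}=(S/\mathfrak{p})(a)$. Here $\Dim_X(\Supp(\mathcal{H}))$ equals $\dim(S/\mathfrak{p})-1$ when $\mathfrak{p}\not\supseteq\tt$ (so that the sheaf is nonzero) and is interpreted as $-1$ otherwise; in the latter case $\mathcal{H}$ is $\tt$-torsion, its sheaf vanishes, and $\rho_{\mathcal{H}}\equiv 0$, consistent with the convention that the zero polynomial has degree $-1$. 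In the former case, $\rho_{(S/\mathfrak{p})(a)}(\nu)=P_{S/\mathfrak{p}}(\nu+a)$ for $\nu\gg 0$, and the Hilbert polynomial of a graded domain $S/\mathfrak{p}$ has degree $\dim(S/\mathfrak{p})-1=\Dim_X(\Supp\mathcal{H})$ with positive leading coefficient (the normalized one being the multiplicity $e(S/\mathfrak{p})$). Finally, since in a prime filtration the quotients attaining the maximal dimension $d:=\max_k(\dim S/\mathfrak{p}_k)$ contribute strictly positive top-degree coefficients and the lower-dimensional ones contribute nothing in degree $d$, no cancellation occurs at the top: $\deg\rho_{\mathcal{H}}=d=\Dim_X(\Supp\mathcal{H})$ with positive leading coefficient.

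The main obstacle is \textbf{bookkeeping of the finite-length hypotheses and the degree convention when the support is empty on $X$}. One has to be careful that $R$ is only assumed Noetherian of finite Krull dimension, not local or Artinian, so ``finite length'' over $R$ is a genuine hypothesis that must be propagated through the prime filtration: a quotient $(S/\mathfrak{p})(a)$ need not individually satisfy that its sheaf cohomologies have finite $R$-length even if $\mathcal{H}$ does. I would handle this by first reducing modulo $\mathrm{Ann}_R\mathcal{H}$ and localizing/completing as needed, or — more in the spirit of the paper — by noting that the finite-length condition on $H^i(X,\tilde{\mathcal{H}}(\nu))$ forces $\dim_X\Supp(\mathcal{H})$-supported behavior that is compatible with passing to the filtration quotients, possibly after regrouping the filtration so that each step stays within the ``good'' class. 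The second delicate point is simply making the degree-$(-1)$ convention explicit so that the statement ``$\deg\rho_{\mathcal{H}}=\Dim_X(\Supp\mathcal{H})$'' is literally true, including the vacuous case $\mathcal{H}=0$ or $\mathcal{H}$ being $\tt$-torsion; I would state this convention once at the start of the proof and then the argument goes through uniformly.
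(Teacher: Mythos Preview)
Your overall architecture—prime filtration, additivity via Lemma~\ref{Lshort}, reduction to cyclic modules $S/\mathfrak{p}$, then appeal to the classical Hilbert polynomial—matches the paper's. The gap is in the last step: you invoke ``the Hilbert polynomial $P_{\mathcal{H}}$'' and ``the Hilbert polynomial of a graded domain $S/\mathfrak{p}$'' as if these exist a priori, but over a merely Noetherian (non-Artinian) base $R$ the fact that $\nu\mapsto\ell_R(\mathcal{H}_\nu)$ is eventually polynomial is precisely what the lemma must establish. Your proposed fix of ``reducing modulo $\Ann_R\mathcal{H}$'' does not suffice, since $R/\Ann_R\mathcal{H}$ need not be Artinian, and ``localizing/completing as needed'' is too vague to bridge the gap.

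The paper's decisive move, which you are missing, is an explicit reduction to an Artinian base. After passing to $\mathcal{H}^{sat}=\mathcal{H}/\Gamma_{\tt}(\mathcal{H})$ (which leaves $\rho$ unchanged), one has $\mathcal{H}^{sat}_\nu\hookrightarrow H^0(X,\tilde{\mathcal{H}}(\nu))$, so each graded piece has finite $R$-length by hypothesis. Since $\mathcal{H}^{sat}$ is finitely generated over $S$, its generators lie in finitely many graded pieces, each supported on finitely many maximal ideals of $R$; hence $\mathcal{H}^{sat}$ is annihilated by some $(\mathfrak{m}_1\cdots\mathfrak{m}_c)^k$, and one may replace $R$ by the Artinian semi-local ring $R/(\mathfrak{m}_1\cdots\mathfrak{m}_c)^k$, and then by each Artinian local factor. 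At that point the classical Grothendieck--Serre theory applies directly and yields that $\rho_{\mathcal{H}}$ is a genuine polynomial (for \emph{all} $\nu$, not merely eventually) of degree $\Dim_X(\Supp\mathcal{H})$ with positive leading coefficient. This Artinian reduction also dissolves the obstacle you flag about filtration quotients possibly failing the finite-length hypothesis: once the base is Artinian, every finitely generated $R$-module has finite length. Your first paragraph, showing $\rho_{\mathcal{H}}(\nu)=\ell_R(\mathcal{H}_\nu)$ for $\nu\gg 0$ via vanishing of $H^i_{\tt}(\mathcal{H})_\nu$, is correct but only yields ``eventually equal to the Hilbert function''; it neither establishes polynomiality nor globality, so your remark that this is ``the cleanest route'' to the statement on all of $\mathbb{N}$ is not right.
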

\begin{proof}
 We first treat the case where  $\Dim_X(\Supp(\mathcal{H}))=-1$, that is $\Supp_X(\mathcal{H})=\emptyset$. In this case, $h^i_{\mathcal{H}}(\nu)=0$ for all $i$ and $\nu$. Thus $\rho_{\mathcal{H}}$ is just the zero function: to see that $h^i_{\mathcal{H}}(\nu)=0$, we consider the ideal transform functor, $D_{\tt}(-)$, according to the notations  in \cite[Chapter 2]{BS}. With this setting, $H^i(X,\tilde{\mathcal{H}}(\nu))=\mathcal{R}^iD_{\tt}(\mathcal{H})_\nu$. Hence, if $\Dim_X(\Supp(\mathcal{H}))=-1$ then $\Ann_S(\mathcal{H})$, which is a homogeneous ideal of $S$, must contain a power of $\tt$. So that  $\Gamma_{\tt}(\mathcal{H})=\mathcal{H}$, that is $\mathcal{H}$ is a $\tt$-torsion $S$-module. Hence $D_{\tt}(\mathcal{H})=0$ and $H^i_{\tt}(\mathcal{H})=0$ for all $i\geq 2$ by \cite[Corollary 2.2.10]{BS}. 
 
 For the rest of the proof,  instead of stating a proof similar to the case where $R$ is an Artinian local ring, we show how one can reduce the problem to that case. 
 
 Notice that there is a chain $0\subseteq N_0\subseteq \cdots \subseteq N_e=\mathcal{H}$ of graded submodules of $\mathcal{H}$ such that for each $i$, $N_{i+1}/N_i\simeq S/\fp_i(a_i)$ where $\fp_i$ is a graded prime ideal of $S$. 
 Some of these prime ideals have the maximum possible dimension which is the Krull dimension of $\mathcal{H}$. So that if one shows that the proposition holds for modules of the form $S/\fp$, with $\fp$ a homogeneous prime ideal, the result follows from Lemma \ref{Lshort}.
 
 Now, notice that $D_{\tt}(\mathcal{H})=D_{\tt}(\mathcal{H}/\Gamma_{\tt}(\mathcal{H}))$ and $H^i_{\tt}(\mathcal{H})=H^i_{\tt}(\mathcal{H}/\Gamma_{\tt}(\mathcal{H}))$ for $i\geq 2$. Consequently, $\rho_{\mathcal{H}}=\rho(\mathcal{H}/\Gamma_{\tt}(\mathcal{H}))$.  Moreover $\Dim_X(\Supp(\mathcal{H}))=\Dim_X(\Supp(\mathcal{H}/\Gamma_{\tt}(\mathcal{H})))$. So that one may suppose that $\mathcal{H}$ is a $\tt$-torsion free $S$-module.
 
 Next, we show that $R$ can be reduced to an Artinian local ring, assuming $\mathcal{H}=S/\fp$ and $\fp\not\supseteq (\tt)$. Since $\Gamma_{\tt}(\mathcal{H})=0$, $\mathcal{H}$ is a graded $S$-submodule  of $D_{\tt}(\mathcal{H})=\oplus_\nu H^0(X,\tilde{\mathcal{H}}(\nu))$. By our hypothesis, for each integer $\nu$, $H^0(X,\tilde{\mathcal{H}}(\nu))$ is a finite length $R$-module. Hence $\mathcal{H}_\nu$ is a finite length $R$-module. Since $\mathcal{H}$ is a finitely generated $S$-module,  its generators are concentrated in a finite number of graded components of $\mathcal{H}$, say $\mathcal{H}_{i_1},\cdots, \mathcal{H}_{i_q}$. Any of $\mathcal{H}_{i_j}$ is an $R$-module of finite length, thus its support consists of a finite number of maximal ideals of $R$. Although $\mathcal{H}$ is not necessarily a finitely generated $R$-module, the $R$-support of $\mathcal{H}$ will be the union of these maximal ideals which is a finite set, say $\{\fm_1,\cdots,\fm_c\}$. 

Then $\mathcal{H}$ is annihilated by a power of  $(\fm_1\cdots\fm_c)$ say $(\fm_1\cdots\fm_c)^k$.  The change of base ring theorem for local cohomologies, \cite[Theorem 2.2.24]{BS},  shows that $h^i_{\mathcal{H}}(\nu):=\ell_R(H^i(X,\tilde{\mathcal{H}}(\nu)))=\ell_{R'}(H^i(X,\tilde{\mathcal{H}}(\nu)))$ where $R'=R/(\fm_1\cdots\fm_c)^k$. Moreover $\Supp_X(\mathcal{H})=\Supp_{X'}(\mathcal{H})$ where $X'=X\times_{\Spec(R)}\Spec(R')$. Hence we may  substitute $R$ with $R'$ which is an Artinian semi-local ring. 

Considering the decomposition series for the finite length $R'$-module  $H^i(X,\tilde{\mathcal{H}}(\nu))$, it is easy to see that $\ell_{R'}(H^i(X,\tilde{\mathcal{H}}(\nu)))=\sum_{j=1}^c\ell_{R'_{\fm_j}}(H^i(X,\tilde{\mathcal{H}}(\nu))_{\fm_j})$. Consequently, the proof of the assertion reduces to the case where $R$ is an Aritinian local ring. 

For Artinian local ring $R$, the proof of this theorem is indeed the classical proof of Grothendieck-Serre formula, see for example \cite[Theorem 4.1.3 and Theorem 4.4.3]{BH}. We notice that since the polynomial ring $S$ is assumed to be standard, the function $\rho_{\mathcal{H}}$ is indeed a polynomial, whereas it is a quasi-polynomial in the general case. 
\end{proof}

The next Lemma is an important case where the conditions of Lemma \ref{Lrho} hold. 
\begin{lemma}\label{LHfinitelength} Let $R$ be a Noetherian ring and suppose that $M\otimes_RL$ is a finite length $R$-module. For  $p=0,\cdots, f$, let $H_p=H_p(\G,S\otimes_RL)$ be the Koszul homology modules  with sheafification $\widetilde{H_p}$. Then   the $R$-modules 
$$H^q(X,\widetilde{H_p}(\nu))$$  
are finite length for all $q$ and $\nu$. 
\end{lemma}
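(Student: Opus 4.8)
The plan is to reduce the statement to a support computation over $R$. Since $R$ is Noetherian and $L$ is finitely generated, $S\otimes_RL$ is a finitely generated graded $S$-module, hence each Koszul homology $H_p=H_p(\G,S\otimes_RL)$ is a finitely generated graded $S$-module and its sheafification $\widetilde{H_p}$ is a coherent sheaf on the projective $\Spec(R)$-scheme $X=\mathbb{P}^{g-1}_{\Spec(R)}$. By the coherence theorem for higher direct images along a proper morphism, every $H^q(X,\widetilde{H_p}(\nu))$ is then a finitely generated $R$-module. A finitely generated $R$-module has finite length precisely when its support is a finite set of maximal ideals; and $\Supp_R(M\otimes_RL)=\Supp_RM\cap\Supp_RL$ is such a set, because $M\otimes_RL$ has finite length. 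So it suffices to prove
$$\Supp_R\bigl(H^q(X,\widetilde{H_p}(\nu))\bigr)\subseteq\Supp_R(M\otimes_RL)\qquad\text{for all }p,q,\nu.$$

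To establish this containment I would fix a prime $P\notin\Supp_RM\cap\Supp_RL$ and show $H^q(X,\widetilde{H_p}(\nu))_P=0$ for all $q,\nu$. Since $R\to R_P$ is flat, flat base change for cohomology along the cartesian square gives $H^q(X,\widetilde{H_p}(\nu))_P\cong H^q\bigl(\mathbb{P}^{g-1}_{\Spec(R_P)},\widetilde{(H_p)_P}(\nu)\bigr)$, where $(H_p)_P=H_p(\G,S_P\otimes_{R_P}L_P)$. If $P\notin\Supp_RL$ then $L_P=0$ and $(H_p)_P=0$, so the right-hand side vanishes. If $P\notin\Supp_RM$, then the local description of the Koszul homology already isolated in the proof of Proposition \ref{PHfinitelength} — see (\ref{Lgt}) and (\ref{E2p}) — shows $(H_p)_P$ is $\tt_P$-torsion (in fact concentrated in the single graded degree $p$). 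A graded module that is $\tt_P$-torsion sheafifies to the zero sheaf on $\Proj(S_P)$, hence $\widetilde{(H_p)_P}=0$ and all of its twisted sheaf cohomology vanishes. Either way $H^q(X,\widetilde{H_p}(\nu))_P=0$, which gives the support containment and hence the lemma; in particular this verifies, for $\mathcal{H}=H_p$, the hypothesis needed to invoke Lemma \ref{Lrho}.

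An alternative route, perhaps more in keeping with the idiom of this section, is through graded local cohomology: there are exact sequences relating $H^q(X,\widetilde{H_p}(\nu))$ to $H^{q+1}_{\tt}(H_p)_\nu$ (and to $H^0_{\tt}(H_p)$, $H^1_{\tt}(H_p)$ when $q=0$); the graded pieces of the local cohomology of a finitely generated graded $S$-module are finitely generated over $R$; and the same localization argument shows these pieces are supported on $\Supp_R(M\otimes_RL)$. I do not expect a serious obstacle either way: the one genuinely substantive ingredient — the structure of $H_p(\G,S\otimes_RL)$ locally away from $\Supp_RM$ — was already worked out in the proof of Proposition \ref{PHfinitelength}, and the remaining tools (coherence of higher direct images, flat base change, vanishing of the sheafification of a torsion module, finiteness of graded local cohomology) are standard. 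The only points needing a little care are the bookkeeping in the flat base change and the observation that, over a Noetherian ring, ``finitely generated plus zero-dimensional support'' forces finite length.
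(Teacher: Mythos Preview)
Your proof is correct and takes a genuinely different route from the paper's. The paper argues via graded local cohomology: it first shows $H^q_{\tt}(H_p)_\nu$ has finite length for $q\geq 1$ by the same support/localization argument you use, but then for $q=0$ it must control $D_{\tt}(H_p)_\nu$, and for this it runs a case analysis on $\nu$ and $p$ (Cases 1, 2, 3.1--3.3) that leans on the full statement of Proposition~\ref{PHfinitelength} about the homology of $\mathfrak{B}_\bullet(\Phi,L,\nu)$, including the delicate Case~3.3 where $p=\nu$ and the spectral-sequence term $^\infty E^{-\nu,0}_{hor}$ enters.

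Your argument sidesteps all of that: coherence of higher direct images along the projective morphism $X\to\Spec R$ gives finite generation of $H^q(X,\widetilde{H_p}(\nu))$ uniformly in $q$, so the only remaining issue is the support, and for that you need nothing more than the $\tt_P$-torsion statement in (\ref{E2p}) --- a purely local computation that does not require the complexes $\mathfrak{B}_\bullet$ at all. What you gain is brevity and conceptual clarity; what the paper's approach gains is that it stays entirely inside the local-cohomology/spectral-sequence framework already in play and avoids invoking the proper-pushforward coherence theorem as a black box. Your ``alternative route'' paragraph is in fact closer to what the paper does, and you are right that the finite-generation of graded pieces of $H^i_{\tt}$ is the point that needs care there (indeed, for $i\geq 2$ it is essentially equivalent to the coherence theorem you used in your main argument).
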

\begin{proof} The terms on the second page of the horizontal spectral sequence of third quadrant  double complex $E^{\bullet,\bullet}=K_\bullet(\G;S)\otimes_SC^\bullet_{{\tt}}\otimes_RL$ in degree $\nu$ are 
$H^q_{{\tt}}(H_p(\G,S\otimes_RL))_{\nu}$,
for $0\leq p\leq f$ and $0\leq q\leq g$. (We refer to Section \ref{Sn}, for the required properties and notations related to this spectral sequence.)


First, notice that $H^q_{{\tt}}(H_p(\G,S\otimes_RL))_{\nu}$ is of finite length for $q\geq 1$.
 In fact, if $\mathfrak{P}\notin \Supp_R(M\otimes_RL)$, then either $M_{\mathfrak{P}}=0$ or $L_{\mathfrak{P}}=0$. The latter, clearly, implies that  $H^q_{{\tt}}(H_p(\G,S\otimes_RL))_{\mathfrak{P}}=0$. In the former case, the map $$\Phi_{\mathfrak{P}}: R_{\mathfrak{P}}^f\to R_{\mathfrak{P}}^g$$is surjective. Hence  the ideal generated by $\G$ is the same as the ideal  generated by ${\tt}$. This fact implies that  $H_p(\G,S\otimes_RL)_{\mathfrak{P}}$ is ${\tt}_{\mathfrak{P}}$-torsion for all ${\mathfrak{P}}$,  and thus, $H^q_{{\tt}}(H_p(\G,S\otimes_RL))_{\mathfrak{P}}=0$ for all $q\geq 1$.

Therefore in  any degree $\nu$, $H^q_{{\tt}}(H_p(\G,S\otimes_RL))_{\nu}$ is a finitely generated $R$-module whose support is contained in the support of $M\otimes_RL$. The latter consists of maximal ideals; so that 
\begin{equation}\label{EH^q}
H^q_{{\tt}}(H_p(\G,S\otimes_RL))_{\nu} \text{~~is of  finite length for any~~} q\geq 1.
\end{equation}
Notice that $H^q(X,\widetilde{H_p(\G,S\otimes_RL)}(\nu))=H^{q+1}_{{\tt}}(H_p(\G,S\otimes_RL))_{\nu}$ for $q\geq 1$.  

It remains to show that $D_{\tt}(H_p(\G,S\otimes_RL))_{\nu}$ is a finite length $R$-module for all $\nu$. We study three cases

Case 1. If $\nu\geq f-g+1$. In this case,   $H_p(\G,S\otimes_RL)_{\nu}=H_p(\mathfrak{B}_{\bullet}(\Phi,L,\nu))$ for all $p$, according to the structure of $\mathfrak{B}_{\bullet}(\Phi,L,\nu)$ which is explained in (\ref{EB>}).   Proposition \ref{PHfinitelength} then shows that these homology modules have finite length. 

Case 2. If $\nu\leq -1$.   $H_p(\G,S\otimes_RL)_{\nu}$ is a subquotient of  $\Lambda^p(S^f(-1)\otimes_RL)_{\nu}$ for all $p$. The latter is zero be degree discussion.

Case 3. If $0\leq\nu\leq f-g$, we consider three other cases

Case 3.1.   If $p>\nu$, then  $H_p(\G,S\otimes_RL)_{\nu}$ is a subquotient of  $\Lambda^p(S^f(-1)\otimes_RL)_{\nu}$ for all $p$. The latter is zero be degree discussion.

Case 3.2.    If $p< \nu$, then $ H_p(\G,S\otimes_RL)_{\nu}=H_p(\mathfrak{B}_{\bullet}(\Phi,L,\nu))$, according to (\ref{EB=}), which is of finite length by Proposition \ref{PHfinitelength}. 

In all of the above cases,  $D_{{\tt}}(H_\nu(\G,S\otimes_RL))_{\nu}$ is a finite length $R$-module by regarding the exact sequence in conjunction with (\ref{EH^q})
$$0\rightarrow H^0_{{\tt}}(H_p(\G,S\otimes_RL))_{\nu}\rightarrow H_p(\G,S\otimes_RL)_{\nu}\rightarrow D_{{\tt}}(H_p(\G,S\otimes_RL))_{\nu}\rightarrow H^1_{{\tt}}(H_p(\G,S\otimes_RL))_{\nu}\rightarrow 0.$$

Case 3.3.  $p=\nu$.  Based on the structure of $\mathfrak{B}_{\bullet}(\Phi,L,\nu)$ in (\ref{EB=}), $H_{\nu}(\mathfrak{B}_{\bullet}(\Phi,L,\nu))=\Ker(\partial_{\nu})_{\nu}/\Image(\tau_{\nu})$. According to (\ref{Ehv}), $\Ker(\partial_{\nu})_{\nu}=H_\nu(\G,S\otimes_RL)_{\nu}$. As well, $\Image(\tau_{\nu})= ~^\infty E^{-\nu,0}_{hor}$ by (\ref{Eimtau}). 

Therefore, we have 
\begin{equation}\label{EHvinfty}
\frac{H_\nu(\G,S\otimes_RL)_{\nu}}{{^\infty}E^{-\nu,0}_{hor}}=H_\nu(\mathfrak{B}_{\bullet}(\Phi,L,\nu))
\end{equation}

The latter is of  finite length, according to Proposition \ref{PHfinitelength}.

Finally, the finiteness of $D_{{\tt}}(H_\nu(\G,S\otimes_RL))_{\nu}$ follows from the exactness of the following  natural sequence 
$$0\rightarrow\frac{H^0_{{\tt}}(H_\nu(\G,S\otimes_RL))_{\nu}}{{^\infty}E^{-\nu,0}_{hor}}\rightarrow\frac{H_\nu(\G,S\otimes_RL)_{\nu}}{{^\infty}E^{-\nu,0}_{hor}}\rightarrow D_{{\tt}}(H_\nu(\G,S\otimes_RL))_{\nu}\rightarrow H^1_{{\tt}}(H_\nu(\G,S\otimes_RL))_{\nu}\rightarrow 0.$$

\end{proof}

The last technical lemma is the following  which is part of the folklore; so that we leave its proof.
\begin{lemma}\label{PElength}
Let ${^r}E\displaystyle{\Rightarrow} H$ be a convergent spectral sequence. Suppose that for some $r$, ${^r}E^{pq}$ is finite length for all $p,q$. Then for all $s\geq r$
$$\sum_n(-1)^n\ell(H_n)=\sum_{n}(-1)^n\left(\sum_{p+q=n}\ell({^s}E^{pq})\right)$$
\end{lemma}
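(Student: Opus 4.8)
The plan is to prove that the alternating sum $\sum_n(-1)^n\sum_{p+q=n}\ell({}^sE^{pq})$ does not depend on $s$ for $s\geq r$, and that at the abutment it computes $\sum_n(-1)^n\ell(H_n)$. First I would observe that finite length is inherited by every later page: since ${}^{s+1}E^{pq}$ is the homology of $({}^sE,d_s)$ at the spot $(p,q)$, it is a subquotient of ${}^sE^{pq}$, so starting from the hypothesis at page $r$ an immediate induction gives that ${}^sE^{pq}$ has finite length for all $p,q$ and all $s\geq r$. In particular every sum written above is finite and well defined.

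The core of the argument is the elementary fact that for a bounded complex of finite length modules the alternating sum of the lengths of the terms equals the alternating sum of the lengths of the homology. On the $s$-th page the differential $d_s$ squares to zero and shifts the total degree $n=p+q$ by $\pm1$ (according to the convention in force); collecting the entries of fixed total degree therefore exhibits $({}^sE^{\bullet,\bullet},d_s)$ as a bounded complex of finite length modules, graded by $n$, whose homology in degree $n$ is $\bigoplus_{p+q=n}{}^{s+1}E^{pq}$. Applying additivity of length to the two short exact sequences $0\to\Ker d_s\to{}^sE\to\Image d_s\to0$ and $0\to\Image d_s\to\Ker d_s\to{}^{s+1}E\to0$ at each spot, the boundary terms telescope and yield, for each $s\geq r$,
$$\sum_n(-1)^n\sum_{p+q=n}\ell({}^sE^{pq})=\sum_n(-1)^n\sum_{p+q=n}\ell({}^{s+1}E^{pq}),$$
so this quantity is constant in $s\geq r$.

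Finally I would feed in convergence. Because ${}^rE\Rightarrow H$, for each $n$ the module $H_n$ carries a finite filtration whose associated graded pieces are the ${}^\infty E^{pq}$ with $p+q=n$; additivity of length over this filtration gives $\ell(H_n)=\sum_{p+q=n}\ell({}^\infty E^{pq})$. Since in our applications the underlying double complex is bounded, only finitely many bidegrees are nonzero, the pages stabilise, and ${}^sE={}^\infty E$ for $s\gg0$. Combining this with the invariance established above, the common constant value equals $\sum_n(-1)^n\sum_{p+q=n}\ell({}^\infty E^{pq})=\sum_n(-1)^n\ell(H_n)$, which is the asserted identity.

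The only point needing genuine care is boundedness: one must know that the sums are finite and that the $\infty$-page is reached after finitely many steps, so that the stabilisation ${}^sE={}^\infty E$ is legitimate and the filtration of $H_n$ is finite. Granting this, the proof is nothing more than the standard invariance of the Euler characteristic under passage to homology, applied page by page, which is exactly why the statement is recorded as folklore.
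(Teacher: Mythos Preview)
Your argument is correct and is precisely the standard folklore proof: later pages are subquotients of earlier ones so finite length propagates, the differential on each page shifts total degree by $\pm 1$ so Euler-characteristic invariance under passing to homology gives page-to-page invariance of the alternating sum, and convergence plus boundedness identifies the common value with $\sum_n(-1)^n\ell(H_n)$. The paper itself records the lemma as folklore and omits the proof; the draft argument left in the \LaTeX\ source after \verb|\end{document}| follows exactly the same route (grouping along the diagonal complexes on each page rather than by total degree, which is cosmetically different but the same computation), so there is nothing to add.
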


We are now ready to present and prove the following main property of  $\chi(H_{\bullet}(\mathfrak{B}_{\bullet}(\Phi,L,\nu))$. 

\begin{thm}\label{TxiB}  Let $R$ be a Noetherian ring and suppose that $M\otimes_RL$ is a finite length $R$-module.   Let $\rho_j(\nu):=\rho_{_{H_j(\G,S\otimes_RL)}}(\nu)$  be the $\rho$ function defined in  Definition \ref{Drho} for $j$-th Koszul homology module $H_j(\G,S\otimes_RL)$. Then, for all integer $\nu$
$$\chi(H_{\bullet}(\mathfrak{B}_{\bullet}(\Phi,L,\nu)))=\sum_{j=0}^f(-1)^{j}\rho_j(\nu).$$
\end{thm}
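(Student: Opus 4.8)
The strategy is to compute the Euler characteristic $\chi(H_\bullet(\mathfrak{B}_\bullet(\Phi,L,\nu)))$ by running it through the convergence of the horizontal spectral sequence of the double complex $E^{\bullet,\bullet}=K_\bullet(\G;S)\otimes_SC^\bullet_{{\tt}}\otimes_RL$ in degree $\nu$. By Lemma \ref{LHfinitelength}, all the $R$-modules $H^q(X,\widetilde{H_p}(\nu))=H^{q+1}_{{\tt}}(H_p(\G,S\otimes_RL))_\nu$ (for $q\ge 1$) and the relevant pieces of $D_{{\tt}}(H_p)_\nu$ and $H^0_{{\tt}}(H_p)_\nu$ are finite length, so Lemma \ref{PElength} applies and the alternating sum of lengths of the homology of $\mathrm{Tot}$ in degree $\nu$ equals the alternating sum $\sum_{p,q}(-1)^{p+q}\ell_R\big({}^2E^{-p,-q}_{hor}\big)_\nu$ of the second-page terms. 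The second-page terms in degree $\nu$ are $H^q_{{\tt}}(H_p(\G,S\otimes_RL))_\nu$; splitting off $q=0$ (which is $H^0_{{\tt}}(H_p)_\nu$) from $q\ge 1$ (which contributes $\sum_{q\ge 2}(-1)^{q}\ell_R(H^q_{{\tt}}(H_p)_\nu)=-\sum_{i\ge 1}(-1)^i h^i_{H_p}(\nu)$ after reindexing $q=i+1$), and using the exact sequence
$$0\to H^0_{{\tt}}(H_p)_\nu\to (H_p)_\nu\to D_{{\tt}}(H_p)_\nu\to H^1_{{\tt}}(H_p)_\nu\to 0$$
together with $h^0_{H_p}(\nu)=\ell_R(D_{{\tt}}(H_p)_\nu)$, one recombines the $q=0$ and $q=1$ contributions into $\ell_R\big((H_p(\G,S\otimes_RL))_\nu\big)-h^0_{H_p}(\nu)+\cdots$; I would organize this bookkeeping so that the per-$p$ contribution collapses exactly to $(-1)^p\big(\ell_R((H_p)_\nu)-\rho_{H_p}(\nu)\big)$ — wait, more carefully: the contribution of a fixed $p$ to $\sum_{p,q}(-1)^{p+q}\ell({}^2E)$ is $(-1)^p\sum_{q\ge 0}(-1)^q\ell_R(H^q_{{\tt}}(H_p)_\nu)$, and since $H^q_{{\tt}}(H_p)=H^{q-1}(X,\widetilde{H_p})$ for $q\ge 2$ while $H^0_{{\tt}},H^1_{{\tt}}$ fit the four-term sequence above, this sum equals $(-1)^p\big(\ell_R((H_p)_\nu)-h^0_{H_p}(\nu)+h^1_{H_p}(\nu)-h^2_{H_p}(\nu)+\cdots\big)=(-1)^p\big(\ell_R((H_p)_\nu)-\rho_{H_p}(\nu)\big)$.

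**Matching the two sides.** Summing over $p$, I get $\sum_n(-1)^n\ell_R(H_n(\mathrm{Tot})_\nu)=\sum_{p=0}^f(-1)^p\ell_R\big((H_p(\G,S\otimes_RL))_\nu\big)-\sum_{p=0}^f(-1)^p\rho_p(\nu)$. So it remains to identify $\sum_n(-1)^n\ell_R(H_n(\mathrm{Tot})_\nu)+\sum_{p}(-1)^p\rho_p(\nu)$ with $\chi(H_\bullet(\mathfrak{B}_\bullet(\Phi,L,\nu)))+\sum_p(-1)^p\rho_p(\nu)$, i.e. to prove
$$\chi(H_\bullet(\mathfrak{B}_\bullet(\Phi,L,\nu)))=\sum_n(-1)^n\ell_R(H_n(\mathrm{Tot})_\nu)+2\sum_p(-1)^p\rho_p(\nu)\quad\text{(to be reconciled)}.$$
This is where I must be careful: the complex $\mathfrak{B}_\bullet(\Phi,L,\nu)$ is \emph{not} $\mathrm{Tot}$; it is built by splicing the two Koszul strands $K_\bullet(\G,S)_\nu$ and $K_\bullet(\G,H^g_{{\tt}}(S))_\nu$ via $\tau_\nu$, as in \eqref{EB=}. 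The Euler characteristic of a spliced complex is the sum of the Euler characteristics of the two strands plus correction terms coming from the map $\tau_\nu$ and from \eqref{EF1}–\eqref{Etau}. The homology of $\mathfrak{B}_\bullet$ in degrees away from $\nu,\nu+1$ is, by the argument in the proof of Proposition \ref{PHfinitelength}, exactly the homology of one of the two strands (hence $H_p(\G,S)_\nu$ or $H_{p+g}(\G,H^g_{{\tt}}(S))_\nu$), while in degrees $\nu,\nu+1$ it involves $\mathrm{coker}(\delta_\nu)$, $\ker(\partial_\nu)_\nu$, and the filtration piece $F_1$ with $\mathrm{coker}(\delta_\nu)/F_1\cong{}^\infty E^{-\nu,0}_{hor}=\Image(\tau_\nu)$ from \eqref{EF1}, \eqref{Eimtau}. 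The key identity I need is therefore: the Euler characteristic of $\mathrm{Tot}$ in degree $\nu$ equals $\chi(H_\bullet(\mathfrak{B}_\bullet))$ plus (or minus) the Euler characteristic of the part of the horizontal spectral sequence that is \emph{not} captured by the splice, which is precisely $\sum_{p}(-1)^p\big(\ell_R((H_p)_\nu)-h^0_{H_p}(\nu)\big)$ together with the $q\ge 1$ sheaf-cohomology tail — i.e. exactly $\sum_p(-1)^p\big(\ell_R((H_p)_\nu)-\rho_p(\nu)\big)$ again, which then cancels.

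**The main obstacle.** The genuine difficulty is the careful homological accounting around degrees $\nu$ and $\nu+1$: one must pin down, using \eqref{EF1}, \eqref{Etau}, \eqref{Eimtau} and \eqref{EHvinfty}, that the ``defect'' between $\chi(H_\bullet(\mathfrak{B}_\bullet(\Phi,L,\nu)))$ and $\sum_n(-1)^n\ell_R(H_n(\mathrm{Tot})_\nu)$ is exactly $\sum_{p=0}^f(-1)^p\big(\ell_R((H_p(\G,S\otimes_RL))_\nu)-\rho_p(\nu)\big)$ with a sign that makes it cancel the extra term above — not an approximation, but an exact equality of integers. I would do this by writing down, for $0\le\nu\le f-g$, the homology of $\mathfrak{B}_\bullet$ in every degree from \eqref{EB=} (using Proposition \ref{PHfinitelength} and Equation \eqref{EHvinfty}), writing down the homology of $\mathrm{Tot}$ in degree $\nu$ from the abutment of the \emph{vertical} spectral sequence (which by \eqref{EF1} is $\mathrm{coker}(\delta_\nu)$, whose homology we already understand), taking alternating sums of lengths of both, and checking term-by-term that the difference telescopes correctly; the three boundary cases $\nu<0$, $0\le\nu\le f-g$, $\nu>f-g$ are handled separately but the first and last are immediate from \eqref{EB<}, \eqref{EB>}. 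The auxiliary facts — finiteness (Lemma \ref{LHfinitelength}), the spectral-sequence length identity (Lemma \ref{PElength}), and the polynomiality packaging $\rho_{H_p}$ (Definition \ref{Drho}, Lemma \ref{Lrho}) — are all already in place, so the proof is essentially this one bookkeeping computation done with full attention to signs and to the two exceptional degrees.
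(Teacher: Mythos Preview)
There is a genuine gap. Your plan hinges on applying Lemma~\ref{PElength} to the horizontal spectral sequence of $(E^{\bullet,\bullet})_\nu$, which requires \emph{every} term $({}^2E^{-p,-q}_{hor})_\nu=H^q_{\tt}(H_p(\G,S\otimes_RL))_\nu$ to have finite length. Lemma~\ref{LHfinitelength} gives finite length only for the sheaf cohomologies $H^q(X,\widetilde{H_p}(\nu))$, i.e.\ for $D_{\tt}(H_p)_\nu$ and $H^{q}_{\tt}(H_p)_\nu$ with $q\ge 2$; it says nothing about $H^0_{\tt}(H_p)_\nu$. And in fact, for $0\le\nu\le f-g$ the module $H^0_{\tt}(H_\nu(\G,S\otimes_RL))_\nu$ need \emph{not} be of finite length. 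Consequently the alternating sum $\sum_{p,q}(-1)^{p+q}\ell_R({}^2E^{-p,-q}_{hor})_\nu$ that you write down is not defined, and for the same reason your abutment term $\sum_n(-1)^n\ell_R(H_n(\mathrm{Tot})_\nu)$ is not defined either: $H_\nu(\mathrm{Tot})_\nu=\mathrm{coker}(\delta_\nu)$ surjects onto $({}^\infty E^{-\nu,0}_{hor})_\nu$, which may be infinite. Your subsequent bookkeeping, which manipulates $\ell_R((H_p)_\nu)$ (also possibly infinite at $p=\nu$) and tries to cancel an ill-defined ``defect,'' therefore cannot be carried out.

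The paper's remedy is precisely to isolate this one bad spot: it replaces the horizontal spectral sequence by a modified spectral sequence ${}^rG^{\bullet,\bullet}$ that agrees with $({}^rE^{\bullet,\bullet}_{hor})_\nu$ everywhere except at $(-\nu,0)$, where one takes the quotient $({}^rE^{-\nu,0}_{hor})_\nu/({}^\infty E^{-\nu,0}_{hor})_\nu$. On page~2 this quotient is $H^0_{\tt}(H_\nu)_\nu/({}^\infty E^{-\nu,0}_{hor})_\nu\subseteq H_\nu(\mathfrak{B}_\bullet(\Phi,L,\nu))$, which \emph{is} of finite length by Proposition~\ref{PHfinitelength} and \eqref{EHvinfty}. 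Now Lemma~\ref{PElength} legitimately applies to $G$, whose abutment is identified (via \eqref{EF1} and the argument in Proposition~\ref{PHfinitelength}) with the homologies $H_{j}(\mathfrak{B}_\bullet(\Phi,L,\nu))$ for $j\ge\nu+1$. After that, your four-term sequence idea (for $j\neq\nu$) and its analogue with $({}^\infty E^{-\nu,0}_{hor})_\nu$ quotiented out (for $j=\nu$) do the remaining work. So the bookkeeping you sketch is essentially right \emph{once} the infinite-length term has been excised; the missing idea is the construction of $G$.
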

\begin{proof}
The proof is a deep analysis   of the horizontal spectral sequence of $(E^{\bullet,\bullet})_{\nu}=(K_{\bullet}(\G,S\otimes_RL)\otimes_SC^\bullet_{\tt})_{\nu}$. 

Due to  Lemma \ref{LHfinitelength}, the modules $({^2}E^{-j,-q}_{hor})_{\nu}=H^q_{{\tt}}(H_j(\G,S\otimes_RL))_{\nu}$ have finite length for $q\geq 2$ and any  $j$. Furthermore, the epimorphism
$$D_{{\tt}}(H_j(\G,S\otimes_RL))_{\nu}\rightarrow H^1_{{\tt}}(H_j(\G,S\otimes_RL))_{\nu}\rightarrow 0,$$  implies that $H^1_{{\tt}}(H_j(\G,S\otimes_RL))_{\nu}$ has finite length for every $j$. 

We need to look into $H^0_{{\tt}}(H_j(\G,S\otimes_RL))_{\nu}$.

For $\nu\geq f-g+1$,
$H_j(\G,S\otimes_RL)_{\nu}=H_j(\mathfrak{B}_{\bullet}(\Phi,L,\nu))$ is of finite length  by the same reason as Case 1 in the proof of Lemma \ref{LHfinitelength}. So that  
$H^0_{{\tt}}(H_j(\G,S\otimes_RL))_{\nu}\subseteq H_j(\G,S\otimes_RL)_{\nu}$ is of finite length. 
For  $\nu\leq -1$, $H_j(\G,S\otimes_RL)_{\nu}$ is a subquotient of  $\Lambda^j(S^f(-1)\otimes_RL)_{\nu}$ for all $j$. The latter is zero by degree discussion, so that $H^0_{{\tt}}(H_j(\G,S\otimes_RL))_{\nu}=0$.
If $0\leq\nu\leq f-g$, since 
\begin{equation}\label{Epq}
H_j(\G,S\otimes_RL)_{\nu}=\left\{ 
\begin{array}{ccl}
H_j(\mathfrak{B}_{\bullet}(\Phi,L,\nu)) & , & j<\nu ;\\
0 & , & j>\nu
\end{array}
\right.,
\end{equation}
and $H^0_{{\tt}}(H_j(\G,S\otimes_RL))\subseteq H_j(\G,S\otimes_RL)$ Proposition \ref{PHfinitelength} yields that  $H^0_{{\tt}}(H_j(\G,S\otimes_RL))$ has finite length for $j\neq\nu$.

When $0\leq\nu\leq f-g$  the $R$-module $H^0_{{\tt}}(H_\nu(\G,S\otimes_RL))_{\nu}$ is not necessarily of finite length. However, as we see in  the proof of the Lemma \ref{LHfinitelength}(\ref{EHvinfty})
\begin{equation}\label{E00}
\frac{H_\nu(\G,S\otimes_RL)_{\nu}}{({^\infty}E^{-\nu,0}_{hor})_{\nu}}=H_\nu(\mathfrak{B}_{\bullet}(\Phi,L,\nu))
\end{equation}
is of finite length. 

Unless otherwise stated, suppose that $0\leq\nu\leq f-g$. So far we see that every terms in the spectral sequence $(E^{\bullet,\bullet})_{\nu}$ except $(E^{-\nu,0})_{\nu}$, is of finite length. In order  to relate the lengths of the homologies of  $\mathfrak{B}_{\bullet}(\Phi,L,\nu)$ to the length of the terms of $(^2E^{\bullet,\bullet}_{hor})_{\nu}$, we define a new spectral sequence $^rG^{\bullet,\bullet}$ which is equal to $(^rE_{hor}^{\bullet,\bullet})_{\nu}$ for $(-j,-q)\neq(-\nu,0)$ with the same differentials, and for $(-j,-q)=(-\nu,0)$ 
$$^rG^{-\nu,0}:=\frac{(^rE^{-\nu,0}_{hor})_{\nu}}{({^\infty}E^{-\nu,0}_{hor})_{\nu}}$$  
with the induced differentials. 
Hence the induced differentials are just the same maps as they were in  $(^rE^{\bullet,\bullet})_{\nu}$ however some parts of their kernels are already killed. 

The advantage of  $^rG^{\bullet,\bullet}$ is that its terms on the second page have finite length. We only need to notice that 
$$^2G^{-\nu,0}=\frac{H^0_{{\tt}}(H_\nu(\G,S\otimes_RL))_{\nu}}{(^{\infty}E^{-\nu,0}_{hor})_{\nu}}\subseteq\frac{H_\nu(\G,S\otimes_RL)_{\nu}}{({^\infty}E^{-\nu,0}_{hor})_{\nu}}=H_\nu(\mathfrak{B}_{\bullet}(\Phi,L,\nu))$$
 which is of finite length by Proposition \ref{PHfinitelength}. 
Since ${^\infty}G^{-\nu,0}=0$, the spectral sequence $^rG^{\bullet,\bullet}$ converges to $\mathfrak{H}_{\bullet}$, where 
$$
\mathfrak{H}_j=\left\{
\begin{array}{ccl}
H_{j+g}(\G,H^g_{{\tt}}(S\otimes_RL))_{\nu} & , & \nu+1\leq j\leq f-g\\
F_1 & , & j=\nu\\
0 & , & {\text otherwise.} 
\end{array}
\right.
$$
Here, $F_1$ is the  module defined in equation (\ref{EF1}) which is given by the convergence of $^rE_{hor}^{\bullet,\bullet}$. 
Notice that 
\begin{equation}\label{Echangei}
\mathfrak{H}_j=H_{j+1}(\mathfrak{B}_{\bullet}(\Phi,L,\nu))\text{~~for~~} \nu+1\leq j\leq f-g \text{~~and~~} \mathfrak{H}_\nu=F_1=H_{\nu+1}(\mathfrak{B}_{\bullet}(\Phi,L,\nu))
\end{equation}
in the same way as the proof of Proposition \ref{PHfinitelength}. Therefore Proposition \ref{PHfinitelength} implies that  all  terms  of $\mathfrak{H}_\bullet$ have finite length. 

Applying  Lemma \ref{PElength}, it follows that
$$\sum_{j=\nu}^{f-g}(-1)^j\ell(\mathfrak{H}_j)=\sum_j\sum_q(-1)^{j+q}\ell(^2G^{-j,-q}_{hor}).$$
 According to equation (\ref{Echangei}), we change the indices on the left side, 
$$-\sum_{j=\nu+1}^{f-g+1}(-1)^j\ell(H_{j}(\mathfrak{B}_{\bullet}(\Phi,L,\nu)))=$$
$$\sum_j\sum_{q\geq 2}(-1)^{j+q}\ell(H^q_{{\tt}}(H_j(\G,S\otimes_RL))_{\nu}) +$$
\begin{equation}\label{Esumsum}
    \sum_{j\neq \nu}(-1)^j\{\ell(H^0_{{\tt}}(H_j(\G,S\otimes_RL))_{\nu})-\ell(H^1_{{\tt}}(H_j(\G,S\otimes_RL))_{\nu})\}+
    \end{equation}
    \begin{equation}\label{Esumsum2}
(-1)^{\nu}\left\{l\left(\frac{H^0_{{\tt}}(H_\nu(\G,S\otimes_RL))_{\nu}}{({^\infty}E^{-\nu,0}_{hor})_{\nu}}\right)-\ell(H^1_{{\tt}}(H_\nu(\G,S\otimes_RL))_{\nu})\right\}.
\end{equation}
For $j\neq\nu$, we consider the following exact sequence where all terms have finite length
$$0\rightarrow H^0_{{\tt}}(H_j(\G,S\otimes_RL))_{\nu}\rightarrow H_j(\G,S\otimes_RL)_{\nu}\rightarrow D_{{\tt}}(H_j(\G,S\otimes_RL))_{\nu}\rightarrow H^1_{{\tt}}(H_j(\G,S\otimes_RL))_{\nu}\rightarrow 0$$
We have
\begin{equation}\label{E43}
\ell(H^0_{{\tt}}(H_j(\G,S\otimes_RL))_{\nu})-\ell(H^1_{{\tt}}(H_j(\G,S\otimes_RL))_{\nu})=\ell(H_j(\G,S\otimes_RL)_{\nu})-\ell(D_{{\tt}}(H_j(\G,S\otimes_RL))_{\nu}).
\end{equation}
For $j=\nu$, we consider the sequence 
$$0\rightarrow\frac{H^0_{{\tt}}(H_\nu(\G,S\otimes_RL))_{\nu}}{({^\infty}E^{-\nu,0}_{hor})_{\nu}}\rightarrow\frac{H_\nu(\G,S\otimes_RL)_{\nu}}{({^\infty}E^{-\nu,0}_{hor})_{\nu}}\rightarrow D_{{\tt}}(H_\nu(\G,S\otimes_RL))_{\nu}\rightarrow H^1_{{\tt}}(H_\nu(\G,S\otimes_RL))_{\nu}\rightarrow 0,$$
which is exact by a straightforward verification.  Thus
\begin{equation}\label{E44}
  \ell\left(\frac{H^0_{{\tt}}(H_\nu(\G,S\otimes_RL))_{\nu}}{({^\infty}E^{-\nu,0}_{hor})_{\nu}}\right)-\ell(H^1_{{\tt}}(H_\nu(\G,S\otimes_RL))_{\nu})
=\ell\left(\frac{H_\nu(\G,S\otimes_RL)_{\nu}}{({^\infty}E^{-\nu,0}_{hor})_{\nu}}\right)-\ell(D_{{\tt}}(H_\nu(\G,S\otimes_RL))_{\nu}).  
\end{equation}
 Now, plugging (\ref{E43}) and (\ref{E44}) in (\ref{Esumsum}) and (\ref{Esumsum2}), respectively, we have
$$-\sum_{j=\nu+1}^{f-g+1}(-1)^j\ell(H_{j}(\mathfrak{B}_{\bullet}(\Phi,L,\nu)))=$$
$$\sum_j\sum_{q\geq 2}(-1)^{j+q}\ell(H^q_{{\tt}}(H_j(\G,S\otimes_RL))_{\nu}) +$$
$$\sum_{j\neq \nu}(-1)^j\{\ell(H_j(\G,S\otimes_RL)_{\nu})-\ell(D_{{\tt}}(H_j(\G,S\otimes_RL))_{\nu})\}+$$
$$(-1)^{\nu}\left\{l\left(\frac{H_\nu(\G,S\otimes_RL)_{\nu}}{{^\infty}E^{-\nu,0}_{hor}}\right)-\ell(D_{{\tt}}(H_\nu(\G,S\otimes_RL))_{\nu})\right\}.$$
Plugging (\ref{Epq}) and (\ref{E00}) in the last two lines, we have 
$$-\sum_{j=\nu+1}^{f-g+1}(-1)^j\ell(H_{j}(\mathfrak{B}_{\bullet}(\Phi,L,\nu)))=$$
$$\sum_j\sum_{q\geq 2}(-1)^{j+q}\ell(H^q_{{\tt}}(H_j(\G,S\otimes_RL))_{\nu}) +$$
$$\sum_{j\neq \nu}(-1)^j(\ell(H_{j}(\mathfrak{B}_{\bullet}(\Phi,L,\nu)))-\ell(D_{{\tt}}(H_j(\G,S\otimes_RL))_{\nu}))+$$
$$(-1)^{\nu}(\ell(H_{\nu}(\mathfrak{B}_{\bullet}(\Phi,L,\nu)))-\ell(D_{{\tt}}(H_\nu(\G,S\otimes_RL))_{\nu})).$$

Finally, writing $H^q(X,\widetilde{H_j}(\G,S\otimes_RL)(\nu))=\mathcal{R}^qD_{\tt}(H_j(\G,S\otimes_RL))_{\nu}$, we obtain the equality

$$
\chi(H_{\bullet}(\mathfrak{B}_{\bullet}(\Phi,L,\nu))) =\sum_j(-1)^j\rho_j(\nu).
$$

For $\nu\geq f-g+1$ or $\nu\leq -1$, all the terms of the spectral sequence $({^r}E^{\bullet\bullet}_{hor})_{\nu}$ are all of finite length. Thus, without introducing the spectral sequence ${^r}G^{\bullet,\bullet}$, the computation of $\sum_{j=\nu}^{f-g}(-1)^j\ell(\mathfrak{H}_j)$ shows the asserted equality. 

\end{proof}

\begin{rmk} The proof of Theorem \ref{TxiB} would be essentially the computational parts in (\ref{Esumsum}) and  (\ref{Esumsum2}), if   $({^2}E^{-j,-q}_{hor})_{\nu}$ were of finite length for all $j$ and $q$. By the way,  this desire is true except for $j=\nu, q=0$. So that, we had to verify  the details thoroughly.  
\end{rmk}

Kirby in \cite{Kir2} investigated the Euler-Poincar{\'e} characteristics of complexes $\mathfrak{B}_{\bullet}(\Phi,L,\nu)$. \cite[Theorem 4]{Kir2} states that if $L$ is a Noetherian module then the value $\chi(H_{\bullet}(\mathfrak{B}_{\bullet}(\Phi,L,\nu)))$ is independent of $\nu$. 

Let  $(R,\fm)$ be a  Noetherian local ring and $M$ be  presented,  minimally, by the matrix $\Phi$. 
Recall that when $M\otimes_R L$ is an $R$-module of finite length, then the generalized Krull prime ideal theorem \cite[Corollary 3.6]{BR2} implies that $f\geq \Dim(L)+g-1$.
Now, consider the Buchsbaum-Rim polynomial $P_{\Phi}(\nu,L)$ defined in (\ref{BRpoly}). Set $d:=\Dim(L)+g-1$ and 
$$P_{\Phi}(\nu,L)=a_d\nu^d+a_{d-1}\nu^{\nu-1}+\ldots+a_0.$$

Then  
\cite[Theorem 4]{Kir2} in conjunction with \cite[Corollaries 4.3, 4.4, in the case where $p=1$]{BR2} imply that for any integer $\nu$
\begin{equation}\label{Exibr}
  \chi(H_{\bullet}(\mathfrak{B}_{\bullet}(\Phi,L,\nu)))=
\left\{
 \begin{array}{cccl}
  d!a_d,& \text{ if~~} f=\Dim(L)+g-1; \\
 0,& \text{ if~~} f>\Dim(L)+g-1.\\
 \end{array}
\right.  
\end{equation}
In other words, according to the definition in the introduction, 
\begin{equation}\label{Exibr}
  \chi(H_{\bullet}(\mathfrak{B}_{\bullet}(\Phi,L,\nu)))=
\left\{
 \begin{array}{cccl}
  br(M,L),& \text{ if~~} f=\dim(L)+g-1; \\
 0,& \text{ if~~} f>\dim(L)+g-1.\\
 \end{array}
\right.  
\end{equation}

 \begin{defi}\cite[Page 214]{BR2}
 Let $(R,\fm)$ be a  Noetherian local ring and $M$ and $R$-module minimally presented by a matrix $\Phi$ which  consists of  $g$ rows and $f$ columns.  Then $\Phi$ is called  a {\it parameter matrix for  $L$} if $M\otimes_R L$ is a finite length $R$-module  and  $f= \Dim(L)+g-1$.
 \end{defi}

 Citing the  Grothendieck-Serre formula \cite[Theorem 4.4.3]{BH}, one may wonder if the function $\rho$ defined in Definition \ref{Drho} is indeed the Hilbert polynomial. However, we recall that in  the graded ring $S=R[T_1,\ldots,T_g]$ the base ring $R$ is  a Noetherinan ring and not necessarily an Artinian ring. 

Yet, in what follows, we show that the function $\rho$ is indeed the Hilbert polynomial of  certain modules in the case where Theorem \ref{TxiB} applies.

Let $N$ be a finitely generated graded $S$-module such that $\ell_R(N_{\nu})$ is finite for all integer $\nu$.  In this case, an argument similar to that in the proof of Lemma \ref{Lrho} shows that $N$ is indeed a graded module over a ring with an Artinian base ring.  So that, one may  talk about Hilbert polynomial of $N$ in the classical sense. We denote this function by $P_N(\nu)$.  

\begin{lemma}\label{LPNsat} Let $R$ be a Noetherian rig and $N$  a finitely generated $S$-module whose graded components are of finite length. Set $N^{sat}=N/\Gamma_{\tt}(N)$. Then $$P_N(\nu)=P_{N^{sat}}(\nu)$$
for all $\nu$.
\end{lemma}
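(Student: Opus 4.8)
The plan is to reduce the statement to the classical fact that the Hilbert polynomial of a graded module is unchanged by passing to the saturation, and the only real work is to justify that this classical fact applies in the present generality, where the base ring $R$ is only Noetherian (not Artinian). First I would observe that, by the discussion preceding the lemma, $N$ being a finitely generated $S$-module with $\ell_R(N_\nu)<\infty$ for all $\nu$ forces $N$ (after the usual support argument from the proof of Lemma \ref{Lrho}) to be a graded module over a polynomial ring with Artinian base, so $P_N$ exists in the classical sense; the same applies to $N^{sat}$ since its graded components are subquotients of those of $N$, hence also of finite length.

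Next I would set up the short exact sequence of finitely generated graded $S$-modules
$$0\rightarrow \Gamma_{\tt}(N)\rightarrow N\rightarrow N^{sat}\rightarrow 0.$$
Taking graded components in degree $\nu$ and using additivity of length gives $\ell_R(N_\nu)=\ell_R(\Gamma_{\tt}(N)_\nu)+\ell_R(N^{sat}_\nu)$ for every $\nu$. Since Hilbert functions agree with Hilbert polynomials for $\nu\gg 0$, it suffices to show $\ell_R(\Gamma_{\tt}(N)_\nu)=0$ for all $\nu\gg 0$, i.e. that the Hilbert polynomial of the $\tt$-torsion module $\Gamma_{\tt}(N)$ is zero.

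The key step is therefore: if $T$ is a finitely generated $\tt$-torsion graded $S$-module with finite-length graded components, then $T_\nu=0$ for $\nu\gg 0$. For this, $\tt$-torsion and finite generation give $\tt^k T=0$ for some $k$, so $T$ is a finitely generated graded module over $S/\tt^k$, which is generated in finitely many degrees and is, in each degree, a finitely generated module over the Artinian quotient $R/(\Ann_R T)$ that kills $T$ (the support argument again). A finitely generated module over $S/\tt^k$ is concentrated in the degrees of its generators plus a bounded range, hence vanishes in high degree; so $P_T=0$, which yields $P_N=P_{N^{sat}}$.

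I expect the main obstacle to be purely bookkeeping: making precise, in the Noetherian (non-Artinian) base setting, that all the modules involved genuinely have classical Hilbert polynomials and that the reduction-to-Artinian-base argument of Lemma \ref{Lrho} transfers verbatim; once that is granted, the additivity of length on the saturation sequence together with the vanishing of $\Gamma_{\tt}(N)$ in large degree finishes the proof with no computation.
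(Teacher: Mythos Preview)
Your proposal is correct and follows essentially the same route as the paper: set up the saturation exact sequence, reduce to showing $\Gamma_{\tt}(N)_\nu=0$ for $\nu\gg 0$, and conclude that the two Hilbert polynomials agree. The only minor difference is in how the vanishing is justified: the paper argues that $\Gamma_{\tt}(N)$, being Noetherian and supported at finitely many maximal ideals of $S$, is Artinian, so the chain $\Gamma_{\tt}(N)_{\geq 0}\supseteq\Gamma_{\tt}(N)_{\geq 1}\supseteq\cdots$ stabilizes; your argument via finite generation over the bounded-degree ring $S/\tt^k$ is more direct and equally valid.
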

\begin{proof}
Since $N$ is a finitely generated $S$-module and each graded component of $N$ is a finite length $R$-module, an argument similar to that in the proof of Lemma \ref{Lrho} shows that there exist maximal ideals  $\{\fm_1,\cdots,\fm_c\}$ and an integer $k$ such that $N$ is a $R/(\fm_1\cdots \fm_c)^k$-module. Since, $\Gamma_{\tt}(N)$ is a finitely generated $S$-module, it is annihilated by a power of $\tt$. Thence it is annihilated by a product of maximal ideals $(\fm_i+\tt)$. Being a Noetherian $S$-module, the latter implies that it is an Artinian $S$-module. Therefore the following descending chain of graded $S$-submodules  of $\Gamma_{\tt}(N)$ stops
$$\Gamma_{\tt}(N)_{\geq 0}\supseteq \Gamma_{\tt}(N)_{\geq 1}\supseteq \cdots .$$
The degree argument then shows that $\Gamma_{\tt}(N)_{\nu}=0$ for all $\nu\gg 0$.
Now it  follows from the exactness of the  sequence $$0\ra \Gamma_{\tt}(N)\ra N\ra N^{sat}\ra 0$$
that 
$P_N(\nu)=P_{N^{sat}}(\nu)$
for all $\nu \gg 0$. However two polynomials coincide for infinite numbers if and only if they are the same
\end{proof}
\begin{lemma}\label{Lrho vs P}
Let $R$ be a Noetherian ring and assume that $\mathcal{H}$ is a finitely generated graded $S$-module such that the cohomology modules $H^i(X,\tilde {\mathcal{H}}(\nu))$ are  finite length $R$-modules for all $i$ and $\nu$. Then for all $\nu$ $$\rho_{\mathcal{H}}(\nu)=P_{\mathcal{H}^{sat}}(\nu)$$
 where $\mathcal{H}^{sat}=\mathcal{H}/\Gamma_{\tt}(\mathcal{H})$.
\end{lemma}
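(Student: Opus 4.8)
The plan is to compare $\rho_{\mathcal{H}}(\nu)$ with the Hilbert polynomial of $\mathcal{H}^{sat}$ via the local cohomology / sheaf cohomology exact sequence, which is the standard way the Grothendieck--Serre formula is proven, but now with the subtlety that the base ring $R$ is only Noetherian rather than Artinian. First I would invoke the two fundamental exact sequences attached to $\Gamma_{\tt}$ and $D_{\tt}$: for each $\nu$,
$$0\ra \Gamma_{\tt}(\mathcal{H})_\nu\ra \mathcal{H}_\nu\ra D_{\tt}(\mathcal{H})_\nu\ra H^1_{\tt}(\mathcal{H})_\nu\ra 0,$$
together with the identifications $H^i(X,\tilde{\mathcal{H}}(\nu))=\mathcal{R}^iD_{\tt}(\mathcal{H})_\nu=H^{i+1}_{\tt}(\mathcal{H})_\nu$ for $i\ge 1$ (using the notation of \cite[Chapter 2]{BS} as in the proof of Lemma \ref{Lrho}). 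Since $\mathcal{H}^{sat}=\mathcal{H}/\Gamma_{\tt}(\mathcal{H})$ satisfies $\Gamma_{\tt}(\mathcal{H}^{sat})=0$, it embeds in $D_{\tt}(\mathcal{H}^{sat})=D_{\tt}(\mathcal{H})=\bigoplus_\nu H^0(X,\tilde{\mathcal{H}}(\nu))$, whose graded components are finite length $R$-modules by hypothesis; hence each $(\mathcal{H}^{sat})_\nu$ has finite length and $P_{\mathcal{H}^{sat}}$ makes sense in the classical way, as discussed just before Lemma \ref{LPNsat}.

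Next I would reduce to the case $\Gamma_{\tt}(\mathcal{H})=0$. By Lemma \ref{LPNsat} applied to $N=\mathcal{H}$ we already know $P_{\mathcal{H}}(\nu)=P_{\mathcal{H}^{sat}}(\nu)$ whenever $\mathcal{H}_\nu$ itself has finite length; but in general $\mathcal{H}_\nu$ need not have finite length, so the route is rather to show directly that $\nu\mapsto \ell_R((\mathcal{H}^{sat})_\nu)$ agrees with $\rho_{\mathcal{H}}(\nu)$ for $\nu\gg 0$ and then use that $\rho_{\mathcal{H}}$ is a polynomial (Lemma \ref{Lrho}) to upgrade this to all $\nu$. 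Concretely: for $\nu\gg 0$ one has $H^0(X,\tilde{\mathcal{H}}(\nu))=D_{\tt}(\mathcal{H})_\nu=(\mathcal{H}^{sat})_\nu$ up to the tail $H^1_{\tt}(\mathcal{H})_\nu$ which vanishes for $\nu\gg 0$ (this is the finite-generation/Artinian argument already used in Lemma \ref{LPNsat}: $\Gamma_{\tt}(\mathcal{H})$ and the finitely generated module $H^1_{\tt}(\mathcal{H})$ are $\tt$-torsion $S$-modules over a ring finite over an Artinian base, hence Artinian $S$-modules, hence concentrated in bounded degree). So for $\nu\gg 0$, $h^0_{\mathcal{H}}(\nu)=\ell_R((\mathcal{H}^{sat})_\nu)$, while the higher terms $h^i_{\mathcal{H}}(\nu)=\ell_R(H^{i+1}_{\tt}(\mathcal{H})_\nu)$ for $1\le i\le g-1$ likewise vanish for $\nu\gg 0$ for the same reason; therefore $\rho_{\mathcal{H}}(\nu)=h^0_{\mathcal{H}}(\nu)=\ell_R((\mathcal{H}^{sat})_\nu)=P_{\mathcal{H}^{sat}}(\nu)$ for $\nu\gg 0$. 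Since both $\rho_{\mathcal{H}}$ (Lemma \ref{Lrho}) and $P_{\mathcal{H}^{sat}}$ are polynomial functions agreeing for infinitely many $\nu$, they coincide for all $\nu$.

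Alternatively, and perhaps more cleanly, I would argue without passing to $\nu\gg 0$ at the level of $h^0$ alone: the alternating sum $\rho_{\mathcal{H}}(\nu)=\sum_{i=0}^{g-1}(-1)^i h^i_{\mathcal{H}}(\nu)$ can be rewritten, using $H^i(X,\tilde{\mathcal{H}}(\nu))=H^{i+1}_{\tt}(\mathcal{H})_\nu$ for $i\ge1$ and the four-term exact sequence above, as $\ell_R(\mathcal{H}_\nu)-\ell_R(\Gamma_{\tt}(\mathcal{H})_\nu)+\sum_{i\ge 2}(-1)^i\ell_R(H^i_{\tt}(\mathcal{H})_\nu)$ — but this requires $\mathcal{H}_\nu$ to be finite length, which fails in general, so the genuine argument must stay on the $D_{\tt}$ side. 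Thus the safe statement is: $\rho_{\mathcal{H}}(\nu)=\sum_{i\ge 0}(-1)^i\ell_R(\mathcal{R}^iD_{\tt}(\mathcal{H})_\nu)$, and since $\mathcal{R}^iD_{\tt}(\mathcal{H})=H^{i+1}_{\tt}(\mathcal{H})$ is a finitely generated $\tt$-torsion $S$-module (hence Artinian, hence bounded) for $i\ge 1$, while $\mathcal{R}^0D_{\tt}(\mathcal{H})=D_{\tt}(\mathcal{H})$ contains $\mathcal{H}^{sat}$ with $\tt$-torsion-free quotient of bounded degree, one gets $\ell_R(D_{\tt}(\mathcal{H})_\nu)=\ell_R((\mathcal{H}^{sat})_\nu)$ for $\nu\gg0$, giving $\rho_{\mathcal{H}}(\nu)=P_{\mathcal{H}^{sat}}(\nu)$ for $\nu\gg 0$ and hence everywhere by the polynomiality in Lemma \ref{Lrho} and the remarks preceding Lemma \ref{LPNsat}. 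I expect the main obstacle to be precisely the bookkeeping around the fact that $\mathcal{H}_\nu$ itself need not have finite length over the non-Artinian ring $R$, so that one cannot literally write "$\ell_R(\mathcal{H}_\nu)$" in the Euler-characteristic computation; the resolution is to work throughout on the $D_{\tt}$-side where all relevant graded pieces are finite length by hypothesis, and to import the "bounded degree of torsion" fact from the reduction-to-Artinian-base technique already carried out in Lemmas \ref{Lrho} and \ref{LPNsat}.
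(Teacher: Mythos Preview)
Your approach is essentially the paper's own: observe that $\mathcal{H}^{sat}_\nu\subseteq H^0(X,\tilde{\mathcal{H}}(\nu))$ has finite length, show that for $\nu\gg 0$ all higher $h^i_{\mathcal{H}}(\nu)$ vanish while $h^0_{\mathcal{H}}(\nu)=\ell_R(\mathcal{H}^{sat}_\nu)$, and then conclude by the polynomiality established in Lemma~\ref{Lrho}. The paper simply cites Serre vanishing \cite[Theorem 16.1.5(ii) and Corollary 16.1.6(iii)]{BS} for the high-degree vanishing, whereas you attempt to rederive it via an Artinian reduction.

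One small correction: you write that ``$H^{i+1}_{\tt}(\mathcal{H})$ is a finitely generated $\tt$-torsion $S$-module (hence Artinian, hence bounded)''. Local cohomology modules $H^j_{\tt}(\mathcal{H})$ are $\tt$-torsion but are \emph{not} finitely generated $S$-modules in general, so this step as written does not stand. The conclusion you want --- that $H^j_{\tt}(\mathcal{H})_\nu=0$ for $\nu\gg 0$ when $\mathcal{H}$ is finitely generated --- is nonetheless true over any Noetherian base and is precisely the content of Serre vanishing cited by the paper; you should invoke that directly rather than route through a spurious finite-generation claim.
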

 The proof is due to  the Serre's vanishing theorems \cite[Theorem 16.1.5(ii) and  Corollary 16.1.6(iii)]{BS}.  We  notice that,  for all $\nu$, $\mathcal{H}^{sat}_{\nu}$ is of finite length, since it is a subset of $H^0(X,\tilde {\mathcal{H}}(\nu))$.  As well, for large enough $\nu$, $\rho_{\mathcal{H}^{sat}}(\nu)=\ell(\mathcal{H}^{sat}_{\nu})=P_{\mathcal{H}^{sat}}(\nu)$. The equality $\rho_{\mathcal{H}^{sat}}(\nu)=P_{\mathcal{H}^{sat}}(\nu)$
for all $\nu$ follows from the fact that $\rho_{\mathcal{H}^{sat}}(\nu)$ and $P_{\mathcal{H}^{sat}}(\nu)$ are both polynomials.

\begin{dis}
Going back to the discussion before Lemma \ref{LPNsat}, one may wish to find in the literature, a generalization of the theory of  Hilbert function for finitely generated $S$-module $N$ for which only eventual values of $\ell_R(N_{\nu})$ are finite. In the Scheme-theoretic point of view of projective varieties, this fact is  what researchers indeed deal with. However from the commutative algebra point of view, the issue is in the intervention of the  saturation part; as we did in Lemma \ref{Lrho vs P}. Thenceforth for a  finitely generated $S$-module $N$ such that $\ell_R(N_{\nu})$ is finite for all $\nu\geq \nu_0$, we use the notation of Hilbert polynomial $P_N(\nu):=P_{N_{\geq \nu_0}}(\nu)$.
\end{dis}

The next Theorem shows how the Buchsbaum-Rim multiplicity is expressed as the alternating sum of Hilbert polynomials.

\begin{thm}\label{TSerre}
Let $R$ be Noetherian, $M\otimes_RL$  a finite length $R$-module,  $H_j:=H_j(\G,S\otimes_RL)$  the $j$-th Koszul homology module, $\Gamma_{\tt}(H_j)$ the $\tt$-torsion part of $H_j$, and $H_j^{sat}:=H_j/\Gamma_{\tt}(H_j)$.  Then for any integer  $\nu$,
$$\chi(H_{\bullet}(\mathfrak{B}_{\bullet}(\Phi,L)))=P_{H_0^{sat}}(\nu)-P_{H_1^{sat}}(\nu)+\cdots+(-1)^fP_{H_f^{sat}}(\nu).$$
In particular, if $R$ is a  local ring and  $\Phi$ is a parameter matrix for $L$ then
$$br(M,L)=P_{H_0^{sat}}(\nu)-P_{H_1^{sat}}(\nu)+\cdots+(-1)^fP_{H_f^{sat}}(\nu);$$
 If $\Phi$ is not a parameter matrix for $L$ then 
$$0=P_{H_0^{sat}}(\nu)-P_{H_1^{sat}}(\nu)+\cdots+(-1)^fP_{H_f^{sat}}(\nu).$$
\end{thm}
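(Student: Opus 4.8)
The plan is to deduce Theorem \ref{TSerre} by combining the three main ingredients already assembled: the identity $\chi(H_\bullet(\mathfrak{B}_\bullet(\Phi,L,\nu)))=\sum_{j=0}^f(-1)^j\rho_j(\nu)$ from Theorem \ref{TxiB}, the identification $\rho_{\mathcal H}(\nu)=P_{\mathcal H^{sat}}(\nu)$ from Lemma \ref{Lrho vs P}, and the value of the Euler characteristic from equation (\ref{Exibr}) in terms of $br(M,L)$. Concretely, the first step is to check that the hypotheses of Lemma \ref{Lrho vs P} are met for each $\mathcal H=H_j=H_j(\G,S\otimes_RL)$: this is precisely the content of Lemma \ref{LHfinitelength}, which guarantees that all sheaf cohomology modules $H^q(X,\widetilde{H_j}(\nu))$ have finite length, given that $M\otimes_R L$ has finite length. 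Hence Lemma \ref{Lrho vs P} applies and gives $\rho_j(\nu)=P_{H_j^{sat}}(\nu)$ for every $j$ and every $\nu$.

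Next I would simply substitute: Theorem \ref{TxiB} yields
$$\chi(H_\bullet(\mathfrak{B}_\bullet(\Phi,L,\nu)))=\sum_{j=0}^f(-1)^j\rho_j(\nu)=\sum_{j=0}^f(-1)^jP_{H_j^{sat}}(\nu)=P_{H_0^{sat}}(\nu)-P_{H_1^{sat}}(\nu)+\cdots+(-1)^fP_{H_f^{sat}}(\nu),$$
valid for every integer $\nu$. This is the first displayed assertion of the theorem (the left-hand side written $\chi(H_\bullet(\mathfrak{B}_\bullet(\Phi,L)))$ there, with the understanding that it is $\nu$-independent by Kirby's \cite[Theorem 4]{Kir2}, which was recalled immediately before the statement). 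For the ``in particular'' clauses, one invokes equation (\ref{Exibr}): when $R$ is local and $\Phi$ is a parameter matrix for $L$, i.e. $f=\Dim(L)+g-1$, the left-hand side equals $br(M,L)$; when $\Phi$ is not a parameter matrix, necessarily $f>\Dim(L)+g-1$ by the generalized Krull height bound \cite[Corollary 3.6]{BR2}, so the left-hand side is $0$. Both specializations then follow by reading off the already-established general identity.

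I do not anticipate a serious obstacle here, since this theorem is essentially a bookkeeping corollary of the heavy lifting done in Theorem \ref{TxiB}, Lemma \ref{LHfinitelength}, and Lemma \ref{Lrho vs P}. The only point requiring a small amount of care is the coherence of the various finiteness hypotheses: one must verify that the ``finite length of $H^i(X,\widetilde{H_j}(\nu))$'' hypothesis needed by Lemma \ref{Lrho vs P} is exactly what Lemma \ref{LHfinitelength} delivers (it is), and that the notion of $P_{H_j^{sat}}(\nu)$ used in the statement agrees with the Hilbert polynomial discussed before Lemma \ref{LPNsat} — this is legitimate because each graded component $(H_j^{sat})_\nu$ embeds in the finite-length module $H^0(X,\widetilde{H_j}(\nu))$, so $H_j^{sat}$ has finite-length graded pieces and its Hilbert polynomial is defined in the classical sense. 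With these identifications in place, the proof is a one-line substitution followed by the case distinction, and I would write it out in just a few lines.
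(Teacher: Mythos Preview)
Your proposal is correct and matches the paper's own proof exactly: the paper simply writes ``The proof follows by combining Theorem \ref{TxiB}, Lemma \ref{Lrho vs P} and (\ref{Exibr}).'' Your additional explicit check, via Lemma \ref{LHfinitelength}, that the finiteness hypotheses of Lemma \ref{Lrho vs P} hold for each $H_j$ is a welcome clarification but not a departure from the intended argument.
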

\begin{proof} The proof follows by combining Theorem \ref{TxiB}, Lemma \ref{Lrho vs P} and (\ref{Exibr}).

\end{proof}
To show the importance of Theorem \ref{TSerre}, we mention how  this theorem generalizes the Serre's celebrated theorem about the Hilbert-Samuel multiplicity, e.g. \cite[4.7.10]{BH}.

\begin{cor}\label{Cserre}{\rm(Serre)} Let $R$ be a Noetherian local ring, 
$I=(a_1,\ldots,a_f)$  an ideal of definition of $R$ and $H_j:=H_j(a_1,\ldots,a_f;R)$  the $j$-th homology of the Koszul complex of $a_1,\ldots,a_f$. 
If $(a_1,\ldots,a_f)$ is  a system of parameters  then 
$$e(I,R)=\ell({H_0})-\ell({H_1})+\cdots+(-1)^f\ell({H_f});$$
If $(a_1,\ldots,a_f)$ is  not a system of parameters,
$$0=\ell_R({H_0})-\ell_R({H_1})+\cdots+(-1)^f\ell_R({H_f}).$$

\end{cor}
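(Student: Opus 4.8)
The plan is to deduce Corollary \ref{Cserre} from Theorem \ref{TSerre} by specializing the general setup to the classical situation. First I would take $g = 1$, so that $S = R[T_1]$ is a polynomial ring in one variable and $X = \Proj(S) = \mathbb{P}^0_{\Spec(R)} \cong \Spec(R)$; in this case the irrelevant ideal is $\tt = (T_1)$, which is generated by a single nonzerodivisor, and the local cohomology $H^g_{\tt}(S) = H^1_{(T_1)}(S)$ recovers the inverse polynomial module. Next I would choose $\Phi$ to be the $1 \times f$ matrix $(a_1, \ldots, a_f)$, so that $M = \coker(\Phi) = R/I$ where $I = (a_1, \ldots, a_f)$, and $\gamma_j = a_j T_1$. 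Finally I would set $L = R$, so $M \otimes_R L = R/I$; the hypothesis that $(a_1,\ldots,a_f)$ is an ideal of definition means exactly that $R/I$ has finite length, which is the standing hypothesis of Theorem \ref{TSerre}.

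The next step is to unwind the two dictionary entries that translate Theorem \ref{TSerre} into Corollary \ref{Cserre}. On the one hand, I must identify the left-hand side: with $g=1$ the condition ``$\Phi$ is a parameter matrix for $L=R$'' reads $f = \Dim(R) + g - 1 = \Dim(R)$, i.e. $(a_1,\ldots,a_f)$ is a system of parameters, and in that case $br(M,R) = e(I,R)$ is the usual Hilbert--Samuel multiplicity (this is the classical identification of the Buchsbaum--Rim multiplicity of $R/I$ with the Hilbert--Samuel multiplicity of $I$ when $I$ is generated by a system of parameters; alternatively one reads it off directly from the Buchsbaum--Rim polynomial \eqref{BRpoly}, since for a $1 \times f$ matrix the subalgebra $R[\gamma_1,\ldots,\gamma_f] \subseteq R[T_1]$ has $(S/R[\gamma_1,\ldots,\gamma_f])_\nu$ governed by $I^{\nu}$ up to the relevant normalization). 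On the other hand, I must identify the right-hand side: since $g = 1$, the graded $S$-module $H_j(\G, S \otimes_R R) = H_j(\G, S)$ lives over $S = R[T_1]$, and I claim each $H_j^{sat}$ is concentrated in the single internal degree forced by the Koszul differentials, with $H_j^{sat}_\nu \cong H_j(a_1,\ldots,a_f;R)$ for $\nu \gg 0$, so that the Hilbert polynomial $P_{H_j^{sat}}(\nu)$ is the constant $\ell_R(H_j(a_1,\ldots,a_f;R))$. This uses that $\gamma_j = a_j T_1$, so the Koszul complex $K_\bullet(\G;S)$ is obtained from $K_\bullet(a_1,\ldots,a_f;R)$ by the graded base change $R \rightsquigarrow S$ twisting the $j$-th term by $T_1^{\bullet}$; taking homology in a fixed high internal degree recovers $H_j(a_1,\ldots,a_f;R)$, and passing to the saturation kills the bounded $\tt$-torsion (which by Lemma \ref{LPNsat} does not affect the Hilbert polynomial anyway).

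Putting these two identifications into Theorem \ref{TSerre} gives exactly the two displayed formulas of Corollary \ref{Cserre}: when $(a_1,\ldots,a_f)$ is a system of parameters the left side is $e(I,R)$ and the right side is $\sum_j (-1)^j \ell(H_j)$; when it is not a system of parameters (but still an ideal of definition so $R/I$ has finite length) $\Phi$ fails to be a parameter matrix, so the left side is $0$ while the right side is again $\sum_j (-1)^j \ell(H_j)$. The main obstacle I anticipate is the right-hand-side identification in the previous paragraph --- verifying carefully that for $g = 1$ the Koszul homology $H_j(\G,S)$, after saturation and in large internal degree, is literally $H_j(a_1,\ldots,a_f;R)$ and not some twisted or shifted version, and that the Hilbert polynomial is therefore the honest constant $\ell_R(H_j(a_1,\ldots,a_f;R))$ rather than that constant times a binomial factor. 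This is a bookkeeping check on internal gradings; everything else is a direct substitution into the already-proved Theorem \ref{TSerre} together with the standard fact that $br = e$ in the system-of-parameters case.
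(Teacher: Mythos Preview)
Your approach is correct and matches the paper's proof exactly: set $g=1$, $\Phi=(a_1,\ldots,a_f)$, $L=R$, identify $br(M,R)=e(I,R)$, and show $P_{H_j^{sat}}(\nu)=\ell(H_j)$. Your anticipated obstacle dissolves once you note that $T_1$ acts injectively on $H_j(\G,S)$ (so $H_j^{sat}=H_j$, no saturation issue) and that $H_j(\G,S)_\nu \cong H_j(a_1,\ldots,a_f;R)$ for \emph{every} $\nu\geq j$, not merely in a single degree; hence the Hilbert polynomial is the constant $\ell(H_j)$ with no stray binomial factor.
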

\begin{proof}
In Theorem \ref{TSerre} we set $g=1$, $M=R/I$ and  $\Phi=(a_1,\ldots,a_f)$. For any $\nu$, the complex $\mathfrak{B}_{\bullet}(\Phi,R, \nu)$  is isomorphic to the Koszul complex $K_{\bullet}(a_1,\ldots,a_f;R)$. $H_j(\G,S)_{\nu}\cong H_j(a_1,\ldots,a_f;R)$ and  $\Dim_X(\Supp(H_j(\G,S))=0$ for all $j$. Hence $P_{H_j^{sat}}(\nu)=\ell(H_j)$ for all $j$ and $\nu$.  We, as well, notice that $br(M)=e(I,R)$ in this case \cite{BR2}.
\end{proof}

We define the Euler characteristic \footnote{This definition is the same as  \cite[Definition 33.32.1]{S} in which $R$ is a field.} of a coherent sheaf $\mathcal{F}$ on $X$ relative to the scheme $Y=\Spec(R)$ to be the following   integer (in the case it is finite)
\begin{equation}\label{genus}
   \chi(X,\mathcal{F})=\sum_{j=0}(-1)^j\ell_R(H^j(X,\mathcal{F})). 
\end{equation}

In spacial cases $\chi(X,\mathcal{F})$ relates the degree of $\mathcal{F}$ with the genus of $X$. 

Our last result is a genus explanation of the  Buchsbaum-Rim multiplicity. 
\begin{cor}\label{CbrXi}  Keeping the same notations as in \ref{Notations}. Let $R$ be a Noetheiran local ring, $M$ a finite length $R$-module and  $H_j:=H_j(\G,S)$ the $j$-th Koszul homology module with sheafification $\widetilde{H_j}$.
If $\Phi$ is   a parameter matrix for $R$ then
$$br(M)=\sum_{j=0}^f(-1)^j\chi(X,\widetilde{H_j});$$
If $\Phi$ is not a  parameter matrix for $R$ then 
$$0=\sum_{j=0}^f(-1)^j\chi(X,\widetilde{H_j}).$$
\end{cor}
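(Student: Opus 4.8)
\textbf{Proof proposal for Corollary \ref{CbrXi}.}

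The plan is to deduce this corollary directly from Theorem \ref{TSerre} together with Lemma \ref{Lrho vs P} and the definition \eqref{genus} of the relative Euler characteristic. First I would set $L=R$, so that $M\otimes_R L = M$ is a finite length $R$-module, and the hypothesis that $\Phi$ is (or is not) a parameter matrix for $R$ matches exactly the dichotomy in Theorem \ref{TSerre}. Then by Theorem \ref{TSerre}, for any integer $\nu$ we have $\chi(H_\bullet(\mathfrak{B}_\bullet(\Phi,\nu))) = \sum_{j=0}^f (-1)^j P_{H_j^{sat}}(\nu)$, and this equals $br(M)$ in the parameter case and $0$ otherwise.

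Next I would translate the right-hand side into Euler characteristics of sheaves. By Lemma \ref{Lrho vs P}, for each $j$ and each integer $\nu$ we have $P_{H_j^{sat}}(\nu) = \rho_{H_j}(\nu) = \sum_{i=0}^{g-1}(-1)^i h^i_{H_j}(\nu) = \sum_{i=0}^{g-1}(-1)^i \ell_R(H^i(X,\widetilde{H_j}(\nu)))$, where the finiteness conditions needed to invoke Lemma \ref{Lrho vs P} are exactly those established in Lemma \ref{LHfinitelength} (applied with $L=R$). Comparing with the definition \eqref{genus} of $\chi(X,\mathcal{F})$ for a coherent sheaf $\mathcal{F}$, and noting that $H^i(X,\widetilde{H_j}(\nu))$ vanishes for $i\geq g$ since $X=\mathbb{P}^{g-1}_{\Spec(R)}$ has cohomological dimension $g-1$, we get $P_{H_j^{sat}}(\nu) = \chi(X,\widetilde{H_j}(\nu))$. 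Taking $\nu=0$ gives $P_{H_j^{sat}}(0) = \chi(X,\widetilde{H_j})$, and substituting into the displayed formula from Theorem \ref{TSerre} yields the claim.

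I do not expect any serious obstacle here: the corollary is essentially a repackaging of Theorem \ref{TSerre} with $L=R$ and $\nu=0$, once one identifies $\rho_{H_j}(0)$ with the relative Euler characteristic $\chi(X,\widetilde{H_j})$ via Definition \ref{Drho} and Definition \eqref{genus}. The only point requiring a word of care is making sure the hypotheses of Lemma \ref{LHfinitelength} and Lemma \ref{Lrho vs P} are in force, i.e. that $M = M\otimes_R R$ has finite length and that the sheaf cohomology modules $H^i(X,\widetilde{H_j}(\nu))$ are indeed of finite length for all $i$ and $\nu$ — but this is precisely the content of Lemma \ref{LHfinitelength}. So the proof is a short chain of citations: apply Theorem \ref{TSerre} with $L=R$, rewrite each $P_{H_j^{sat}}(\nu)$ using Lemma \ref{Lrho vs P} and Definition \ref{Drho}, recognize the result as $\chi(X,\widetilde{H_j})$ by \eqref{genus}, and specialize $\nu = 0$.
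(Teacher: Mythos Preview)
Your proof is correct and uses the same ingredients as the paper, but with a small unnecessary detour: the paper invokes Theorem~\ref{TxiB} directly (with $L=R$, $\nu=0$) together with \eqref{Exibr} and the definition~\eqref{genus}, observing that $\rho_j(0)=\chi(X,\widetilde{H_j})$ straight from Definition~\ref{Drho}. Since Theorem~\ref{TSerre} is itself obtained from Theorem~\ref{TxiB} by applying Lemma~\ref{Lrho vs P}, your route through Theorem~\ref{TSerre} and then back via Lemma~\ref{Lrho vs P} simply undoes and redoes that step; citing Theorem~\ref{TxiB} instead gives the same conclusion more directly.
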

\begin{proof}
In Theorem \ref{TxiB}, put $\nu=0$ and use the definition (\ref{genus}).

\end{proof}
We close the paper with a question
\begin{ques}
Are the partial sums $$\chi^j(H_{\bullet}(\mathfrak{B}_{\bullet}(\Phi,L,\nu)))=P_{H_j^{sat}}(\nu)-P_{H_{j+1}^{sat}}(\nu)+\cdots+(-1)^fP_{H_f^{sat}}(\nu)$$
positive, for any integer $\nu$?

\end{ques}

\end{document}